\def\rz{\ifmmode{I\hskip -3pt R}
    \else{\hbox{$I\hskip -3pt R$}}\fi}
\def\nz{\ifmmode{I\hskip -3pt N}
    \else{\hbox{$I\hskip -3pt N$}}\fi}
\def\gz{\ifmmode{Z\hskip -4.8pt Z}
    \else{\hbox{$Z\hskip -4.8pt Z$}}\fi}
\def\cz{\ifmmode{C\hskip -4.8pt\vrule height5.8pt\hskip6.3pt}
\else{\hbox{$C\hskip -4.8pt\vrule height6.0pt\hskip6.3pt$}}\fi}
\def\qz{\ifmmode{Q\hskip -5.0pt\vrule height6.0pt depth0pt
   \hskip6pt}
    \else{\hbox{$Q\hskip -5.0pt\vrule height6.0pt depth0pt
   \hskip6pt$}}\fi}
\newcommand{\beq}{\begin{equation}}
\newcommand{\eeq}{\end{equation}}
\newcommand{\beql}{\begin{equation} \label}
\newcommand{\beqs}{\begin{eqnarray}}
\newcommand{\eeqs}{\end{eqnarray}}
\newcommand{\beas}{\begin{eqnarray*}}
\newcommand{\eeas}{\end{eqnarray*}}
\newcommand{\ber}{\begin{array}}
\newcommand{\eer}{\end{array}}
\newcommand{\becs}{\begin{cases}}
\newcommand{\eecs}{\end{cases}}
\newcommand{\leftm}{\left[\begin{array}}
\newcommand{\rightm}{\end{array}\right]}
\newcommand{\bfd}{{\bf d}}
\newcommand{\bfe}{{\bf e}}
\newcommand{\bfk}{{\bf k}}
\newcommand{\bfm}{{\bf m}}
\newcommand{\bfn}{{\bf n}}
\newcommand{\bfr}{{\bf r}}
\newcommand{\bfv}{{\bf v}}
\newcommand{\bfw}{{\bf w}}
\newcommand{\bfx}{{\bf x}}
\newcommand{\bfA}{{\bf A}}
\newcommand{\bfF}{{\bf F}}
\newcommand{\bfG}{{\bf G}}
\newcommand{\bfI}{{\bf I}}
\newcommand{\bfL}{{\bf L}}
\newcommand{\bfM}{{\bf M}}
\newcommand{\bfP}{{\bf P}}
\newcommand{\bfQ}{{\bf Q}}
\newcommand{\bfR}{{\bf R}}
\newcommand{\bfY}{{\bf Y}}
\newcommand{\calH}{{\cal H}}
\newcommand{\calK}{{\cal K}}
\newcommand{\calL}{{\cal L}}
\newcommand{\calR}{{\cal R}}
\newcommand{\calU}{{\cal U}}
\newcommand{\calW}{{\cal W}}
\newcommand{\calX}{{\cal X}}
\newcommand{\utld}{{\tilde{u}}}
\newcommand{\vtld}{{\tilde{v}}}
\newcommand{\bfLtld}{\tilde{\bfL}}
\newcommand{\Khat}{{\hat{K}}}
\newcommand{\fhat}{{\hat{f}}}
\newcommand{\bfxtld}{{\tilde{\bfx}}}
\newcommand{\khat}{{\hat{\bf k}}}
\newcommand{\usmhat}{{\hat{ u}}}
\newcommand{\dbfL}{{{\vartriangle}\bf L}}
\newcommand{\bbK}{{\mathbb{K}}}
\newcommand{\bbL}{{\mathbb{L}}}
\newcommand{\chihat}{{\hat{\chi}}}
\newcommand{\bbQ}{{\mathbb{Q}}}
\newcommand{\diag}{{{\rm\, diag}}}
\newcommand{\dbfA}{{{\vartriangle\!\!\bfA}}}
\newcommand{\intbar}{{\int\!\!\!\!\!- }} 
\newcommand{\inttbar}{{\int\!\!\!\!\!\!- }} 
\newcommand{\ssint}{\hspace{-3mm}\int} 
\newcommand{\intY}{\ \ \ \ssint_Y}
\newcommand{\eps}{{\varepsilon}}
\newcommand{\phihat}{\hat{\phi}}
\newcommand{\half}{\frac{\;1}{\;2}}
\newcommand{\diverg} {{\rm div}}
\newcommand{\dx}{d {\bf x}}
\newcommand{\dy}{d {\bf y}}
\newcommand{\aae}{{{\rm a.e.}}}
\newcommand{\aand}{{{\rm\;\; and\;\;}}}
\newcommand{\Tr}{{{\rm Tr}}}
\newcommand{\iif}{{{\rm if}}}
\newcommand{\iin}{{{\rm \;in\;}}}
\newcommand{\oon}{{{\rm \;on\;}}}
\newcommand{\oor}{{{\rm or}}}
\newcommand{\jumpl}{{{{[\![}}}}
\newcommand{\jumpr}{{{{]\!]}}}}
\begin{document}

\title{ New extremal inclusions and their applications to two-phase composites}

\author{ Liping Liu \and Richard  James \and Perry Leo }

\date{Received: date / Revised version: date}

\maketitle

\begin{center}
Draft: {August 17, 2010}
\end{center}

\vspace{-7cm}

{\small Article submitted to Archive for Rational Mechanics and Analysis.} \hfill

\vspace{7cm}

\begin{center}
\section*{Abstract}
\end{center}
In this paper, we find a class of special inclusions that
have the same property with respect to second order linear
partial differential equations as holds for
ellipsoids. That is, in the simplest case and in physical terms,
constant magnetization of the inclusion implies constant
magnetic field on the inclusion.
The special inclusions are found as solutions of a simple variational inequality.
 This variational inequality allows us to
prescribe the connectivity and periodicity properties of
the inclusions.  For example we find periodic arrays of inclusions in two and three dimensions
for which constant magnetization of the inclusions implies constant
magnetic field on the inclusions.
The volume fraction of the inclusions can be any number between zero and one. We find
such inclusions with any finite number of components
and components that are multiply connected.
  These special inclusions enjoy many useful
properties with respect to homogenization and energy
minimization.  For example, we use them to give new results
on a) the effective properties of two-phase composites and
 b) optimal bounds and optimal microstructures for
 two-phase composites.

\vspace{2mm}

\tableofcontents

\renewcommand{\theequation}{\thesection-\arabic{equation}}
\setcounter{equation}{0}
\section{Introduction}\label{sec:intro}

{\sc Poisson}~\cite{Poisson1826} found a remarkable property of ellipsoids: given
a uniformly magnetized ellipsoid,  the
induced magnetic field is uniform inside the
ellipsoid.   Explicit  expressions for this field were
obtained by {\sc Maxwell}~\cite{Maxwell1873}.
{\sc Eshelby}~\cite{Eshelby1957, Eshelby1961} exploited a
similar result in the linear theory of elasticity.
Eshelby's solution asserts that a uniform eigenstrain on an
ellipsoidal subregion in an infinite elastic medium induces
uniform stress inside the ellipsoid (see also, {\sc Mura}~\cite{Mura1987}).
In this paper we find other shapes besides ellipsoids
with these properties.

The relevant problem can be formulated as
the following system of partial differential equations
for $\bfv: \rz^n \to \rz^m$:
\beqs \label{problem:magel}
\diverg[\bfL \nabla \bfv +\bfP\chi_\Omega ]=0\qquad \oon\;\;\rz^n,
\eeqs understood in the sense of distributions. Here,
$\bfP\in \rz^{m\times n}$,  the tensor $\bfL:\rz^{m\times n}\to \rz^{m\times n}$
is assumed to be symmetric and  positive semi-definite,
 and $\chi_{\Omega}$ is the characteristic function of $\Omega \subset \rz^n$.
 $\Omega$ is called the inclusion.
 In the application to ferromagnetism, $m = 1$, $\nabla \bfv$ is the magnetic field,
$\bfP$ is the magnetization, and $\bfL$ is the $n\times n$ identity matrix.  In applications
 to electrostatics, $m = 1$, $\nabla \bfv$ is the electric field,
$\bfP$ is the polarization, and $\bfL$ is
the permittivity tensor of free space.
In the application to linearized elasticity,  $m = n$,  $\nabla \bfv$ is the displacement
gradient, $\bfP$ is the eigenstress and $\bfL$ is the elasticity tensor.

   In  the theory of composites,  in fracture mechanics and in the modeling of phase transformations
a related problem, called the {\em inhomogeneous}
 Eshelby inclusion problem, appears often.
The governing equations for this problem are
\beqs \label{problem:inhomrz}
 \diverg[\bfL(\bfx, \Omega)(\nabla
\bfv(\bfx)+\bfF \chi_{\Omega})]=0\qquad \oon\;\rz^n,
 \eeqs
 where  $\bfF\in \rz^{m\times n}$ is the eigenstrain and the elasticity
tensor
\beq \label{eq:bfLXI}
\bfL(\bfx,\Omega)=\left\{
\begin{array}{cl}
  \bfL_1   & \qquad \bfx\in \Omega,  \\
  \bfL_0   & \qquad \bfx \in \rz^n \setminus \Omega.
 \end{array} \right.
 \eeq
The inhomogeneous Eshelby inclusion problem concerns two
different elastic materials, one inside the inclusion, and
an imposed eigenstrain $\bfF$ on the inclusion.
{\sc Eshelby}~\cite{Eshelby1957} realized that
under suitable mild hypotheses on the elasticity tensors
the homogeneous problem~\eqref{problem:magel} can be used to solve
inhomogeneous problem~\eqref{problem:inhomrz} provided that
the induced field $\nabla \bfv$ for the homogeneous problem~\eqref{problem:magel}
is {\em constant} on  $\Omega$.  It is this relation between (\ref{problem:inhomrz})
and \eqref{problem:magel} that allows us to use the special inclusions
to obtain optimal bounds for composites.

The requirement that a solution $\bfv$
 of problem~\eqref{problem:magel} satisfy $\nabla \bfv =
 constant$ on  $\Omega$ places strong restrictions on the region
$\Omega$. The main purpose of this paper is to find special
regions with this  property in the periodic and
other cases, in all dimensions $n \ge 2$, including cases in which $\Omega$ is
multiply connected.

Since  $\nabla \bfv$ being constant on
$\Omega$ leads to great simplification, ellipsoids play a central
 role in the theory of composites ({\sc Christensen~\cite{Christensen1979}; Milton~\cite{Milton2002}}),
in micromechanics ({\sc Mura~\cite{Mura1987}}) and in experimental measurements
({\sc Brown~\cite{Brown1962}}).  The uniformity of the induced
field can also be used to simplify
 free energy minimization problems that arise
in theories of  ferroelectric and magnetostrictive materials
({\sc Desimone \& James~\cite{DeSimoneJames2002};
 Bhattacharya \& Li~\cite{BhattLi2001};  Liu, James \& Leo~\cite{LiuJamesLeo2006}}),
 and this was our original motivation for developing
the theory of E-inclusions.   Roughly speaking, even though these
are non-convex variational problems, the special properties
of ellipsoids are used to show that certain weak limits of
minimizing sequences are uniform, and this allows one to find,
and also to minimize, the relaxed energy.
Two or more ellipsoids
do not enjoy this special property.  Thus
in many of these applications, only one
ellipsoid can be present in the model, and therefore the
results apply either to isolated ellipsoids or to
composites in the dilute limit.   In most if not all of these
cases it is not an ellipsoid {\it per se} that is being used
but only its property of having  uniform field, when it
is uniformly magnetized. Many authors have speculated on the
possibility that other regions may have this
property ({\sc Mura~\cite{Mura2000}}), and
that, if so, their analysis would also apply to those regions.

We now define an {\bf E-inclusion}, which is the mathematically
natural generalization of an ellipsoid in this context\footnote{ The
terminology ``E-inclusions'' refers to three associations:
 1) this study was motivated by the Eshelby
 inclusion problem as described above;
 2) they are a
generalization of ellipsoids, and, conversely, ellipsoids can
 be regarded as the dilute limits of special periodic E-inclusions;
 3) they are extremal microstructures for a broad class of
 energy minimization problems
 for multiphase composites, as indicted in the title.
 } (see {\sc Liu, James \& Leo~\cite{LiuJamesLeo2007}}).
 We note that, while these definitions concern the scalar-valued case, we will
 show (Section 4) that they apply to many vector-valued examples of the
 type described above. Separate but closely related definitions
 of E-inclusions are given for the periodic and nonperiodic
 cases. More general situations are discussed in Section~\ref{sec:discuss}.

 For the purpose of the definition below define
 \beqs \label{eq:calH}
 \calH = \bigg\{v\in
W^{1,2}_{loc}(\rz^n):
\; \limsup_{|\bfx| \to \infty} |\bfx|^{n-2}|v(\bfx)|< \infty &\iif\;n\ge 3,\\
\limsup_{|\bfx| \to \infty} (\log|\bfx|)^{-1}|v(\bfx)|< \infty&\iif\;n=2  \bigg \}.
 \nonumber \eeqs
By elementary calculations using the Newtonian potential, the equation
$\Delta u = \chi$ on $\rz^n$ with $\chi \in L^{\infty}$, supp$\, \chi$ compact,
has a unique
solution $u \in \calH$ (for $n= 2$, unique up to an additive constant,
resp.) in the sense of distributions.
It is  known from elliptic regularity theory that
$u \in W^{2,2}_{loc}(\rz^n)$.

\begin{definition} \label{def:Ein}
Let  $\Omega_i$  ($i=1,\cdots, N$) be open, bounded
 and mutually disjoint subsets of $\rz^n$, and
 $\bbK=(\bfQ_1,\cdots, \bfQ_N)$ be an array of $N$ symmetric $n \times n$
 matrices.
\renewcommand{\labelenumi}{\roman{enumi})}
\begin{enumerate}

 \item \;   $\Omega=\cup_{i=1}^N\Omega_i$ is an
   {\bf E-inclusion} corresponding to $\bbK$
  if the solution $u \in \calH$ of
\beqs \label{sec1:over1Prz}
     \Delta u=
\sum_{i=1}^N p_i\chi_{\Omega_i}\quad \oon\;\rz^n, \ \
p_i=\Tr(\bfQ_i) \eeqs
 satisfies
 \beqs \label{eq:overnonp}
\nabla\nabla u=\bfQ_i&\;\;  \aae\;\oon\; \Omega_i \;
\;\qquad\forall\;i=1,\cdots, N. \eeqs


\item Let a Bravais lattice
$\calL = \{\sum_{i=1}^n \nu_i \bfe_i: \nu_i \in \gz\;\aand\;
\bfe_1,\cdots, \bfe_n\in \rz^n \; \mbox{ are linearly independent}
\}$ with an open unit cell $Y = \{\sum_{i = 1}^{n} x_i \bfe_i: 0 <
x_i <1 \}$ be given. Assume $\Omega=\cup_{i=1}^N\Omega_i \subset Y$.
Then  the  set $\Omega_{per}=\cup_{\bfr\in \calL} \{\bfr+\Omega\}$
is a {\bf periodic E-inclusion}  corresponding to $\bbK$  if  a weak
solution  of \beqs \label{sec1:over1P} \becs
     \Delta u= \sum_{i=0}^N p_i\chi_{\Omega_i}&\quad \oon\;Y, \quad p_i=\Tr(\bfQ_i), \\
\mbox{periodic boundary conditions} &\quad \oon\;\partial Y\\
\eecs \eeqs in $W^{2,2}_{per}(Y)$ satisfies \beqs \label{eq:overp}
\nabla\nabla u=\bfQ_i& \quad\aae\;\oon\; \Omega_i\;
\qquad\;\forall\;i=1,\cdots, N, \eeqs
 where $\Omega_0=Y\setminus \Omega$,
 and $p_0\in \rz$ is such that $ \sum_{i=0}^N p_i\theta_i=0$. Here
  $\theta_i=|\Omega_i|/|Y|$ is the volume fraction of $\Omega_i$ in $Y$.
\end{enumerate}
\end{definition}


 Equations~\eqref{problem:magel} and \eqref{problem:inhomrz} and
their periodic counterparts are
 closely related to equations~\eqref{sec1:over1Prz}
 and \eqref{sec1:over1P}, respectively.  In particular,
in the magnetic case (recall $m=1$), $\bfv = \bfP \cdot \nabla u$, and the magnetization
on inclusion $\Omega_i$ ($i = 1, \dots, N$) is $(p_i-p_0) \bfP$ and zero elsewhere. Also,
each matrix in $(\bfQ_1,\cdots, \bfQ_N$) plays
the same role as the conventional {\em demagnetization matrix} does
for an isolated ellipsoid. The demagnetization matrix defines
the linear transformation that maps the magnetization
to the magnetic field  when $\Omega$ is an ellipsoid.
Further discussion of the relation between
\eqref{problem:magel}, \eqref{problem:inhomrz} and
\eqref{sec1:over1Prz}, \eqref{sec1:over1P}
is given in Section \ref{sec:hominhom}.

Besides ellipsoids,  examples of E-inclusions
include the well-known construction of
{\sc Vigdergauz~\cite{ Vigdergauz1986}} for two dimensional periodic E-inclusions
which are simply connected in one unit cell.
Other examples of which we are aware include two dimensional
two-component E-inclusions in a recent paper of
{\sc Kang, Kim \& Milton~\cite{KangMilton2006b}}.
 While considering the effective properties of an elastic plate with
a periodic array of ``equal-strength'' holes,
Vigdergauz found his construction using
complex variable methods. {\sc Grabovsky \& Kohn}~\cite{GK1995b}
gave a concise derivation of the
Vigdergauz construction
and showed that it is an optimal microstructure for two-phase composites.
These results on the status of E-inclusions as optimal
structures in homogenization theory, and the results
mentioned above on energy minimization, show that
E-inclusions have a more fundamental relation to the
underlying equations than merely as a method of
simplification.  We present several applications of this
type  in Section~\ref{sec:appl}.

In Section~\ref{sec:fbvss} we present a general method for constructing  E-inclusions. They
 are found as solutions of a simple variational inequality with a piecewise
quadratic obstacle. The  region where the
minimizer touches the obstacle defines
the  E-inclusion. The existence and regularity of minimizers for this variational inequality
are adapted from the textbooks of {\sc  Kinderlehrer \& Stampacchia~\cite{KindStamp1980}; Friedman~\cite{Friedman1982}}.
 We further show that periodic E-inclusions corresponding
to a single negative-semidefinite matrix ($N=1$) can be
constructed in all dimensions and with  any volume fraction.
The lattice vectors defining the periodicity can be arbitrarily prescribed.
In  two dimensions, one family of these periodic
E-inclusions  specialize to the Vigdergauz microstructure. By varying
the piecewise quadratic obstacle in the variational inequality,
a variety of more general  E-inclusions  can be produced.
For example, we can construct periodic E-inclusions with multiply connected components.
 In Section~\ref{sec:example} we present a numerical scheme for calculating periodic E-inclusions
and various examples of the calculated  periodic E-inclusions.
 In Section~\ref{sec:hominhom} we solve  the periodic Eshelby inclusion problem and explicitly
 calculate the effective properties of two-phase composites with periodic
 E-inclusion structures.
In Section~\ref{sec:locminI} explicit bounds for the effective properties of two-phase composites are derived
and are shown to be attained by the periodic E-inclusions.
In Section~\ref{sec:discuss} we introduce a generalized concept of E-inclusions
called sequential E-inclusions in terms of gradient Young measure.
 We finish with a summary of our results.

\renewcommand{\theequation}{\thesection-\arabic{equation}}
\setcounter{equation}{0}

\section{ A method for constructing special inclusions} \label{sec:fbvss}

\subsection{Existence of periodic E-inclusions}~\label{Sec24}
In this section, we present a method for the construction of
E-inclusions. Readers who are more interested in the examples and
applications of E-inclusions may skip to Sections \ref{sec:example}
and \ref{sec:appl}.

A general method for constructing special inclusions  is based on
a  {\em variational inequality}
({\sc Kinderlehrer \& Stampacchia~\cite{KindStamp1980}; Friedman~\cite{Friedman1982}}).
For periodic E-inclusions in $\rz^n$ ($n\ge 1$), we consider
\beqs
\label{sec1:fbv1}
G_f(u_f)\equiv \min_{u\in K_{per}} \bigg\{G_f(u)\equiv
\int_Y [ \half | \nabla u|^2 +f u ]d \bfx \bigg\}, \eeqs
where  $f> 0$ is a constant and the admissible set $K_{per}=\{u \in
W_{per}^{1,2}(Y)\,:\; u(\bfx) \ge \phi_{per}(\bfx) \;\aae\oon\;Y\}$.
This type of minimization problem is also known as a {\em free-boundary
obstacle problem}  and the given function
$\phi_{per}$ is called the {\em obstacle}. Here and afterwards,
$Y$ is an open unit cell associated with a Bravais lattice $\calL$ as defined
above. We  further assume
\beqs \label{eq:phiper}
\phi_{per}\in C^{0,1}_{per}(Y)\;\;\aand\;\;\hspace{6cm}\nonumber\\
\frac{\partial ^2 \phi_{per}}{\partial \xi^2} \ge -C \mbox{ in the sense of distributions on $\rz^n$},
\eeqs
where $\xi$ is any unit vector in $\rz^n$, $\partial /\partial \xi$ denotes the
directional derivative and $C>0$ is a constant.
This means that for any nonnegative
$\varphi\in C_c^\infty(\rz^n)$ and any unit vector $\xi \in \rz^n$,
\beqs \label{eq:maincond}
\int [\phi_{per}+\half C|\bfx|^2] \frac{\partial^2\varphi}{\partial \xi^2} \dx \ge 0.
\eeqs

The existence and uniqueness of the
solution of the variational inequality~\eqref{sec1:fbv1}
in the periodic case is not available, so a proof is given below.
Once the existence of a minimizer is established, its
regularity  then follows from a
careful reading of {\sc Brezis \& Kinderlehrer}~\cite{BrezisKind1974} (see also
 {\sc Friedman}~\cite{Friedman1982}).
\begin{theorem}~\label{thrm:EUforfbv}
The variational inequality~\eqref{sec1:fbv1}-\eqref{eq:phiper} has a unique
minimizer $u_f\in W^{2,\infty}_{per}(Y)$ for  $f>0$.
\end{theorem}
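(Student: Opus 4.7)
My plan is to apply the direct method for existence, exploit the strict convexity in the gradient for uniqueness, and then invoke the obstacle-problem regularity theory of Brezis--Kinderlehrer, adapted to the periodic setting, for the $W^{2,\infty}$ bound.

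\textbf{Existence.} Since $\phi_{per} \in K_{per}$, we have $\inf_{K_{per}} G_f \le G_f(\phi_{per}) < \infty$. For any admissible $u$, the pointwise bound $u \ge \phi_{per} \ge m := \min_Y \phi_{per}$ and the assumption $f>0$ imply that any minimizing sequence $u_k$ with $G_f(u_k)\le M$ satisfies
\[
m \;\le\; \bar u_k := \frac{1}{|Y|}\int_Y u_k \,\dx \;\le\; \frac{M}{f|Y|}.
\]
Absorbing the $fu_k$ term in $G_f(u_k)\le M$ then bounds $\|\nabla u_k\|_{L^2(Y)}$. The Poincar\'e--Wirtinger inequality on the cell $Y$ gives a uniform bound in $W^{1,2}_{per}(Y)$. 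Extract a subsequence with $u_k \rightharpoonup u_f$ in $W^{1,2}_{per}$ and $u_k\to u_f$ in $L^2(Y)$; the constraint $u_f \ge \phi_{per}$ a.e.\ is preserved in the limit, and weak lower semicontinuity of the convex continuous functional $G_f$ yields that $u_f$ is a minimizer.

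\textbf{Uniqueness.} If $u_1,u_2$ are two minimizers, then $(u_1+u_2)/2\in K_{per}$ by convexity of the constraint and
\[
G_f\!\left(\tfrac{u_1+u_2}{2}\right) \;\le\; \tfrac12 G_f(u_1) + \tfrac12 G_f(u_2),
\]
with equality forced. Strict convexity of $\tfrac12|\nabla u|^2$ gives $\nabla u_1=\nabla u_2$ a.e., so $u_1-u_2\equiv c$ is constant. The identity $G_f(u_1)=G_f(u_2)+fc|Y|$ with $f>0$ then forces $c=0$.

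\textbf{Regularity.} Testing the variational inequality with $v=u_f+\varphi$ for nonnegative periodic $\varphi$ yields the distributional inequality $\Delta u_f \le f$ on $Y$, with equality on the noncoincidence set $\{u_f>\phi_{per}\}$. The semiconvexity hypothesis \eqref{eq:phiper} on $\phi_{per}$, namely $\partial^2\phi_{per}/\partial\xi^2 \ge -C$ for every unit vector $\xi$, transmits to $u_f$ by the difference-quotient argument of Brezis--Kinderlehrer, yielding $\partial^2 u_f/\partial\xi^2 \ge -C'$ distributionally for every $\xi$. Combining this lower bound with the upper bound $\Delta u_f \le f$ gives, for each direction $\xi$,
\[
\frac{\partial^2 u_f}{\partial \xi^2} \;=\; \Delta u_f - \sum_{\eta\perp\xi}\frac{\partial^2 u_f}{\partial \eta^2} \;\le\; f + (n-1)C',
\]
so all pure second directional derivatives of $u_f$ are bounded a.e., and by polarization the full Hessian lies in $L^\infty(Y)$, i.e.\ $u_f\in W^{2,\infty}_{per}(Y)$.

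The main obstacle is the last step. The Brezis--Kinderlehrer framework is developed for Dirichlet problems on bounded domains, so one must verify that the relevant translation-invariance and penalization estimates close up on the torus. This is in fact easier than the Dirichlet case, because the periodic unit cell is boundaryless and the usual boundary regularity complications drop out; the interior estimates then apply globally on $Y$.
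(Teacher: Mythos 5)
Your proof is correct and follows essentially the same route as the paper: the direct method for existence (using $f>0$ to bound the mean and the energy to bound the gradient), a convexity argument for uniqueness (the paper adds the two first-order variational inequalities rather than invoking the parallelogram identity, but both reduce to $\nabla u_1=\nabla u_2$ a.e.\ with $f>0$ killing the residual constant), and deferral of the $W^{2,\infty}_{per}(Y)$ regularity to Brezis--Kinderlehrer adapted to the periodic setting, exactly as the paper does. Your added detail on assembling the Hessian bound from $\Delta u_f\le f$ together with the directional lower bounds $\partial^2 u_f/\partial\xi^2\ge -C'$, and your observation that the boundaryless periodic cell makes the adaptation easier, are consistent elaborations of what the paper leaves to the citation.
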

\begin{proof}
Without loss of generality assume meas$\, Y = 1$.
  Choosing the constant test function $v=M=\sup_{\bfx\in Y}| \phi_{per}(\bfx)|$,
we verify that
\beas |\inf_{v\in K_{per}} G_f(v)| \leq f M<+\infty. \eeas
 Let $v_k\in K_{per}$ be a minimizing sequence
 such that
 \beas \inf_{v\in K_{per}} G_f(v)\leq
G_f(v_k)=\half \intY|\nabla \vtld_k|^2\,\dx+d_k f \leq \inf_{v\in K_{per}}
G_f(v)+\frac{1}{k},
 \eeas where
$d_k=\int_Y v_k \dx$ and $\vtld_k=v_k-d_k\in \calW^{1,2}_{per}(Y)=\{ \varphi\in
W^{1,2}_{per}(Y):\;\int_Y \varphi\dx=0\}$. Clearly, $d_k\geq \int_Y \phi_{per}(\bfx)\dx\ge -M$.
Thus, since $f>0$,
\beas \half \intY|\nabla \vtld_k|^2\,\dx \le 2fM+\frac{1}{k} \quad
{\rm and } \quad |d_k|\le M+\frac{1}{fk} .
\eeas
Therefore, up to a subsequence and
without relabelling, we have \beas \vtld_k \rightharpoonup \utld_f
\mbox{\rm \;weakly in } \calW^{1,2}_{per}(Y),\quad d_k\rightarrow d_f
{\rm\;in\;}\rz,\eeas
and
\beas
\quad v_k=\vtld_k+d_k \rightharpoonup u_f :=\utld_f+d_f\in K_{per}
\mbox{\rm\; weakly in } W^{1,2}_{per}(Y),
\eeas
which, by the lower semi-continuity of the functional $G_f$ with respect to weak convergence
in $W^{1,2}_{per}(Y)$, implies
$u_f$  is a minimizer.

If there is another minimizer $v_f \in K_{per}$,
from the convexity of the set $K_{per}$, we see that
$w_\eps=u_f+\eps (v_f-u_f)\in K_{per}$ for all $\eps\in (0,1)$.
 Since $u_f$ being a minimizer implies $\frac{1}{\eps}
[G_f(w_\eps)-G_f(u_f)] \ge 0 $, sending $\eps$ to $0$ we obtain
\beqs \label{sec1:minweak} \intY
[\nabla u_f \cdot\nabla (v_f-u_f) +f(v_f-u_f)]\dx \geq 0.
\eeqs
Similar calculations applied to the minimizer $v_f$ and
$w_\eps=v_f+\eps (u_f-v_f)$  yield
\beqs \label{sec1:minweak1} \intY
[\nabla v_f \cdot\nabla (u_f-v_f) +f(u_f-v_f)]\dx \geq 0.
\eeqs
Adding \eqref{sec1:minweak} and \eqref{sec1:minweak1}, we obtain
 $\int_Y|\nabla u_f- \nabla v_f|^2\dx\le 0$. Thus, $u_f$ and $v_f$ can be different at most by a constant.
But $G_f(u_f)=G_f(v_f)$ and $f>0$  imply this constant must be zero, i.e.,
$v_f=u_f$. The statement of regularity $u_f \in W^{2 \infty}(Y)$ follows
from {\sc Brezis \& Kinderlehrer}~\cite{BrezisKind1974}, as noted above.
\end{proof}

By the result ($u_f\in W^{2,\infty}_{per}(Y)$), the following {\em
complementarity conditions} hold (see {\sc Duvaut \& Lions}~\cite{DuvautLions1977}):
\beqs \label{eq:Deltauf}
-\Delta u_f+f\ge 0, \qquad u_f\ge \phi_{per},\;\;\aand \nonumber \\
\qquad (-\Delta u_f+f)(u_f-\phi_{per})=0 \;\;\;\qquad\aae\; \oon\; Y.
\eeqs
 Once the $W^{2,\infty}$ regularity  is established, the
conditions (\ref{eq:Deltauf}) are equivalent to the variational inequality.

Recalling the definition of periodic E-inclusions in Section~\ref{sec:intro},
we use  periodic piecewise
quadratic obstacles to construct periodic E-inclusions.  First, we assign $N$
quadratic functions $q_1,\dots, q_M$ on $Y$.  Second, we consider a disjoint measurable subdivision
of $Y$ into subsets $\calU_1,\dots, \calU_M$.  We say that $\phi_{per}:\rz^n \to \rz$ is a
{\bf periodic piecewise quadratic obstacle}
if
\renewcommand{\labelenumi}{(\roman{enumi})}
\begin{enumerate}
\item $\phi_{per}$ satisfies  \eqref{eq:phiper}, and
\item $\phi_{per} = q_{i}$ on $\calU_{i}$ for $i=1,\cdots, M$.
\end{enumerate}

 In general one may not be able to construct a periodic
 piecewise quadratic obstacle from given quadratic functions.
Below we give
 two examples of periodic  piecewise quadratic obstacles.
Both examples use concave quadratic functions.
 We note that the definition, however, also includes some cases in which some
 of the $q_i$ are convex and others are concave\footnote{Specific examples
 are easily constructed.}.
  The latter is important for extending the attainability of
the Hashin-Shtrikman bounds for multiphase composites ({\sc Liu}~\cite{LiuMultiphase, LiuPHL}).

\begin{example} Let  $q_1,\dots, q_M$  be strictly concave
quadratic functions {\it defined on} $\rz^n$, and let $\calL$ be as in
the definition of a periodic E-inclusion.
 Then
\beqs \label{eq:phiperexample}
\phi_{ per}(\bfx) = \sup \{ q_i(\bfx + \bfr): \bfr \in \calL, i = 1, \dots, M \}
\eeqs
is a periodic piecewise quadratic obstacle. To show this,  first note
that $\phi_{per}\in C^{0,1}_{per}(Y)$ because, restricted to $Y$,
(\ref{eq:phiperexample}) is the sup over a finite number of quadratic
functions.  Let $C>0$ be
a constant large enough such that $C\bfI+\nabla \nabla q_i$
is positive definite for all $i=1,\cdots, M$, where $\bfI$ is
the $n\times n$ identity matrix.
Change variables if necessary, it suffices to verify \eqref{eq:maincond}
for direction $\xi=(1,0\cdots, 0)$.

For any  nonnegative $\varphi\in C_c^\infty(\rz^n)$,
let $(a,b)\times D \subset \rz^n$ be an open finite rectangular box containing the support of
 $\varphi$.
Restricted to this box, $\phi_{per}$ defined in \eqref{eq:phiperexample} can
again be expressed as the supremum of finite many quadratic functions
$q'_j$ ($j=1,\cdots, M'$) which are translates of $q_1,\cdots, q_M$, and so
\beqs \label{eq:phiperD}
\phi_{ per}(\bfx)+\half C|\bfx|^2 =\hspace{6cm}\\
 \sup \{ q'_j(\bfx )+\half C|\bfx|^2:  j = 1, \dots, M' \} \;\qquad \oon\;(a,b)\times D. \nonumber
\eeqs By Fubini's theorem, we have \beqs \label{eq:Fubini} \int
[\phi_{per}+\half C|\bfx|^2] \frac{\partial^2\varphi}{\partial
x_1^2} \dx =\int_{D} \int_a^b [\phi_{per}+\half C|\bfx|^2]
\frac{\partial^2\varphi}{\partial x_1^2}dx_1 d \bfxtld, \eeqs where
$\bfxtld=(x_2,\cdots, x_n)$. For fixed $\bfxtld$, let
$g(x_1)=\phi_{per}+\half C|\bfx|^2$. It is clear that $g$ is
continuous and $g'(x_1)=dg(x_1)/dx_1$ has only finitely many
discontinuities in the interval $(a, b)$, enumerated as
$x_1^\ast<\cdots < x_m^\ast$.

At a discontinuous point of $g'(x_1)$, e.g., $x_1^\ast$, by
\eqref{eq:phiperD} we see that for $\eps>0$ small enough, \beas
g(x_1)= \becs
s_1(x_1)>s_2(x_1)&\iif\;x_1\in (-\epsilon+x_1^\ast,x_1^\ast),\\
s_2(x_1)>s_1(x_1)&\iif\;x_1\in (x_1^\ast,\epsilon+x_1^\ast),\\
\eecs \eeas and $s_1(x_1^\ast)=s_2(x_1^\ast)$, where $s_1(x_1)$ and
$s_2(x_1)$ are two quadratic functions. Thus, \beas \jumpl
g'(x_1^\ast)\jumpr= g'(x_1^\ast+)-g'(x_1^\ast-)=s_2'(x_1^\ast)-
s_1'(x_1^\ast)\ge 0. \eeas
Integrating by parts for the $m$ discontinuities,
we have \beqs \label{eq:varphimu}
 &&\int_a^b
[\phi_{per}+\half C|\bfx|^2] \frac{\partial^2\varphi}{\partial
x_1^2}dx_1= -\int_a^b\frac{\partial \varphi}{\partial x_1} g'(x_1)dx_1  \\
&&\hspace{1cm}=\sum_{i=0}^{m} \int_{x_i^\ast}^{x_{i+1}^\ast}\varphi
g_1''(x_1)dx_1+\sum_{i=1}^m \jumpl g(x_i^\ast)\jumpr \varphi
(x_1=x_i^\ast,\bfxtld) \ge 0  \nonumber \eeqs for any nonnegative $\varphi$,
 where $x_0^\ast=a$ and  $x_{m+1}^\ast=b.$ Since
\eqref{eq:varphimu} holds for any $\bfxtld \in D$,
equation~\eqref{eq:maincond} follows from \eqref{eq:Fubini} and
\eqref{eq:varphimu}.

\end{example}

\begin{example} In this example we consider one inclusion per unit cell
but allow some eigenvalues of $\bfQ_1$ to be zero.
Assume $N = 1$ and consider a negative semi-definite symmetric
matrix $\bfQ_1 = \bfQ\neq 0$ and
denote by $\calR(\bfQ)\subset \rz^n$ the range of  $\bfQ$.
Let $(\bfe_1,\cdots, \bfe_{n'})$ be a basis of the subspace $\calR(\bfQ)$.
Then
\beqs \label{eq:phiperexample2}
\phi_{ per}(\bfx) = \sup \{
\half (\bfx + \bfr)\cdot\bfQ(\bfx + \bfr):
 \bfr=\sum_{i=1}^{n'} \nu_i\bfe_i, \;\nu_i\in \gz\}
\eeqs
is a periodic piecewise quadratic obstacle. The proof that this construction gives
a periodic piecewise quadratic obstacle is similar to that in Example~1.
\end{example}

Having constructed the obstacle, we now turn to the theory of
variational inequalities.  Let $u_f$ be the solution given in Theorem
\ref{thrm:EUforfbv} of the variational
inequality corresponding to a piecewise quadratic periodic
obstacle $\phi_{per}$.
 The coincident set $\Omega^f_{per}$ and
noncoincident set $N^f_{per}$ are defined by
\beqs \label{sec1:Oper}
\Omega^f_{per}:= \{\bfx\in
\rz^n:\:u_f(\bfx)=\phi_{per}(\bfx) \}
\eeqs
and
\beqs \label{sec1:Nper}
N^f_{per}:=\{\bfx\in \rz^n:\:u_f(\bfx)>\phi_{per}(\bfx) \},
\eeqs
 respectively.
 The definition of the coincident set clearly implies
\beqs \label{eq:nablanablau}
\nabla \nabla u_f(\bfx)=\nabla\nabla \phi_{per}(\bfx)
 \ \ \aae \ \ {\rm on }\  \Omega_{per}^f.
\eeqs
Therefore, the minimizer $u_f\in W^{2,\infty}_{per}(Y)$ solves the  overdetermined problem
 \beqs \label{problem:over3} \becs
\Delta u_f=f \chi_{N^f_{per}} +\Delta \phi_{per} \chi_{\Omega^f_{per}}  &\aae\;\oon\;Y,\\
\nabla \nabla u_f(\bfx)=\nabla\nabla \phi_{per}(\bfx) & \aae\ \
 {\rm on }\ \  \Omega^f_{per}\cap Y,\\
\mbox{periodic boundary conditions} & \oon\;\partial Y.\\
\eecs \qquad \eeqs
Let $\bbK = \{ \nabla \nabla q_i: i = 1, \dots, M \} = \{\bfQ_1, \dots, \bfQ_N\}$,
the $\bfQ_i$ being distinct symmetric $n \times n$ matrices.
Let $\Omega_i \subset \Omega_{per}^f$ be the largest open
set such that
\beq \label{ei}
\int_{\Omega_i} |\nabla \nabla u_f - \bfQ_i|^2\, d\bfx = 0.
\eeq
Clearly, the $\Omega_1, \dots, \Omega_N$ are disjoint and open. We have that
\beqs \label{eq:Ein}
\Delta u_f=f\;\;& \aae \oon&\;\Omega_0=Y\setminus (\cup_{i=1}^N\Omega_i)\qquad\aand \\
\qquad
\nabla\nabla u_f(\bfx)=\bfQ_i\;\;\;& \aae \oon&\;\Omega_i
\qquad \;\;\qquad \forall\,i=1,\cdots, N.\nonumber
\eeqs
Therefore, we have obtained the following result.
\begin{theorem} \label{thrm:exist}
 Consider the variational inequality~\eqref{sec1:fbv1} with
  a periodic piecewise quadratic obstacle  $\phi_{per}$.
  Then the periodic extension of $\Omega = \cup_1^N \Omega_i$,
  with $\Omega_i$ defined by (\ref{ei}),
  is a periodic E-inclusion
  corresponding to $\bbK=(\bfQ_1,\cdots, \bfQ_N)$  and
  with $p_0 = f$, for any $f>0$.
\end{theorem}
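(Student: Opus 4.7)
The plan is to exhibit $u_f$ from Theorem~\ref{thrm:EUforfbv} as the weak solution of the periodic Poisson problem required by Definition~\ref{def:Ein}(ii), and to verify that $u_f$ together with the open sets $\Omega_i$ of (\ref{ei}) satisfies all the defining conditions of a periodic E-inclusion corresponding to $\bbK$ with $p_0=f$. The analytical work is already organized in (\ref{problem:over3}) and (\ref{eq:Ein}); the theorem essentially packages those consequences into the language of Definition~\ref{def:Ein}.

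I would check the three ingredients of Definition~\ref{def:Ein}(ii) in order. For the functional setting, Theorem~\ref{thrm:EUforfbv} gives $u_f\in W^{2,\infty}_{per}(Y)\hookrightarrow W^{2,2}_{per}(Y)$ with periodic boundary values built in. The Hessian identity $\nabla\nabla u_f=\bfQ_i$ a.e.\ on $\Omega_i$ for $i=1,\dots,N$ is immediate from the defining property (\ref{ei}), and taking traces yields $\Delta u_f=\Tr(\bfQ_i)=p_i$ a.e.\ on $\Omega_i$. The Poisson equation $\Delta u_f=\sum_{i=0}^N p_i\chi_{\Omega_i}$ with $p_0=f$ then follows by combining this with the statement from (\ref{eq:Ein}) that $\Delta u_f=f$ a.e.\ on $\Omega_0=Y\setminus\cup_{i=1}^N\Omega_i$.

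The compatibility relation $\sum_{i=0}^N p_i\theta_i=0$ comes for free by integrating the Poisson identity over $Y$: the right-hand side equals $|Y|\sum p_i\theta_i$, while $\int_Y\Delta u_f\,d\bfx$ vanishes because $\nabla u_f$ is $\calL$-periodic and the fluxes through opposing faces of $Y$ cancel under the divergence theorem. Thus $p_0=f$ is exactly the value prescribed by that compatibility condition, completing the verification that $\Omega=\cup_{i=1}^N\Omega_i$ is a periodic E-inclusion corresponding to $\bbK$.

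The most delicate point is the assertion from (\ref{eq:Ein}) that $\Delta u_f=f$ on all of $Y\setminus\cup\Omega_i$. On the noncoincident part $N_{per}^f$ this is immediate from the complementarity conditions (\ref{eq:Deltauf}), so the work is to show that the residue $\Omega_{per}^f\setminus\cup_{i=1}^N\Omega_i$ is a null set. Here I would use that $\nabla\nabla u_f=\nabla\nabla\phi_{per}$ a.e.\ on the coincident set $\Omega_{per}^f$ (standard agreement of the weak Hessians of Sobolev functions that coincide on a set), together with the piecewise-quadratic structure of $\phi_{per}$, which forces $\nabla\nabla\phi_{per}\in\bbK$ a.e.\ on $Y$. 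The ``largest open'' prescription in (\ref{ei}) may then be recast as selecting Lebesgue-density-one points of the measurable level sets $\{\nabla\nabla u_f=\bfQ_i\}$; once the free boundary and the interfaces between the cells $\calU_j$ are known to carry zero measure, $\cup\Omega_i$ covers $\Omega_{per}^f$ up to a null set and the packaging argument goes through.
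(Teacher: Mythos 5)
Your proposal is correct and takes essentially the same route as the paper: the paper's ``proof'' of Theorem~\ref{thrm:exist} is nothing other than the chain you reproduce --- the complementarity conditions \eqref{eq:Deltauf}, the Hessian agreement \eqref{eq:nablanablau} on the coincident set, the overdetermined system \eqref{problem:over3}, the definition \eqref{ei} of the $\Omega_i$, and the conclusion \eqref{eq:Ein} --- and your derivation of the compatibility relation fixing $p_0=f$ is exactly the paper's \eqref{eq:fvf}.

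The one place where you go beyond the paper is also the one place where your argument remains incomplete: the claim that the residue $\Omega^f_{per}\setminus\bigcup_{i=1}^N\Omega_i$ is a null set, which is silently built into the first line of \eqref{eq:Ein} (the paper asserts it without comment). As written, your treatment is conditional (``once the free boundary and the interfaces \dots are known to carry zero measure''), so it flags rather than closes the gap; moreover, recasting the ``largest open set'' in \eqref{ei} as the set of Lebesgue density-one points of $\{\nabla\nabla u_f=\bfQ_i\}$ is inaccurate --- $\Omega_i$ is contained in, but in general strictly smaller than, that set; what is needed is that a.e.\ point of $\Omega^f_{per}$ possesses a whole neighborhood on which $\nabla\nabla u_f$ equals a single $\bfQ_i$ a.e. Both null-measure facts can be proved. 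For the interfaces: on the interior $V$ of $\Omega^f_{per}$ one has $u_f=\phi_{per}$ pointwise, so the relatively closed sets $F_j=\{x\in V:\,u_f(x)=q_j(x)\}$ cover $V$; distinct quadratics agree only on a quadric, so $F_j\cap F_k$ is null for $j\ne k$, and each open set $W_j=V\setminus\bigcup_{k\ne j}F_k$ satisfies $W_j\subset F_j$, whence $u_f\equiv q_j$ on $W_j$ and $W_j$ lies in one of the $\Omega_i$; since $V\setminus\bigcup_j W_j\subset\bigcup_{j\ne k}(F_j\cap F_k)$ is null, the interior of the coincident set is covered up to a null set. For the free boundary $\partial\Omega^f_{per}$: complementarity together with \eqref{eq:nablanablau} forces $\Tr(\bfQ_i)\le f$ a.e.\ on the coincident set; where the inequality is strict, a Caffarelli-type nondegeneracy argument gives positive density of the noncoincident set at free-boundary points, hence $|\partial\Omega^f_{per}|=0$ by the Lebesgue density theorem, while points with $\Tr(\bfQ_i)=f$ satisfy $\Delta u_f=f=p_0$ and can simply be absorbed into $\Omega_0$ without disturbing the Poisson equation required by Definition~\ref{def:Ein}. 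With these two facts supplied, your packaging argument --- and the paper's assertion \eqref{eq:Ein} --- is complete.
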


We now discuss restrictions on the volume fractions of periodic
E-inclusions.  Recall that  $\theta_i=|\Omega_i|/|Y|$ is
the volume fraction of $\Omega_i$
 in $Y$ ($i=0,1,\cdots, N$).
 Clearly,  the volume fractions $\Theta = \{ \theta_1, \dots, \theta_N \}$
 necessarily satisfy
 \beqs \label{eq:thetaconstraints}
\theta_i\in [0,\;1]\;\mbox{for all }i=1,\cdots, N
\aand\;1-\theta_0=\sum_{i=1}^N \theta_i\in (0, \;1).
 \eeqs
However, they are  not all known {\em a priori}. Additionally,
they  satisfy
\beqs \label{eq:fvf}
\inttbar_Y \Delta u_f \dx=0\;\;\Longrightarrow\;\;
f\theta_0+\sum_{i=1}^N p_i \theta_i=0,
\eeqs
where $\intbar_V\;\;$ denotes the average of the integrand over $V$,  $p_i=\Tr(\bfQ_i)$ ($i=1,\cdots, N$) from the second of \eqref{eq:Ein}.
Since  $f$ can be any positive number, equation~\eqref{eq:fvf} implies that
the volume fraction  $1-\theta_0$ of the periodic E-inclusion
  can be any number between zero and one in the case $N=1$.

There are non-obvious restrictions on  $\bbK$ and $\Theta$ that arise from
the definition of a periodic E-inclusion.
Let $u$ be the solution of  \eqref{sec1:over1P}-\eqref{eq:overp} appearing
in the definition of a periodic E-inclusion.
For any $\bfm\in \rz^n$,  the divergence theorem implies that
 \beqs \label{eq:bfQi}
 \theta_0 \inttbar_{\Omega_0} |(\nabla \nabla u) \bfm|^2 \dx \hspace{5cm} \nonumber \\
 = \bfm \cdot [\inttbar_Y \Delta u \nabla\nabla u\dx]\bfm-
\sum_{i=1}^N \theta_i \inttbar_{\Omega_i} |(\nabla \nabla u) \bfm|^2 \dx.
 \eeqs
We bound the left-hand side of \eqref{eq:bfQi} using Jensen's inequality:
\beqs \label{ine}
\theta_0 \inttbar_{\Omega_0} |(\nabla \nabla u) \bfm|^2 \dx &\ge&
\theta_0 \bfm\cdot\bigg[\inttbar_{\Omega_0} \nabla \nabla u \dx\bigg]^2\bfm \nonumber \\
 &=& \frac{1}{\theta_0} \bfm\cdot\bigg[\sum_{i=1}^N \theta_i\inttbar_{\Omega_i}\nabla\nabla u\dx \bigg]^2\bfm.
\eeqs
The last step in (\ref{ine}) follows from the periodicity of $u$:
\beqs \label{eq:upp0}
\inttbar_Y \nabla \nabla u \dx=0 \ \Longrightarrow \
\theta_0 \inttbar_{\Omega_0} \nabla \nabla u \dx
 = - \sum_{i=1}^N \theta_i\inttbar_{\Omega_i}\nabla\nabla u\dx.\qquad
 \eeqs
For the first term on the right-hand side of \eqref{eq:bfQi}, since $\Delta u = p_0$ on $\Omega_0$,
we have
\beas
\theta_0 \bfm \cdot [\inttbar_{\Omega_0} \Delta u \nabla\nabla u\dx]\bfm
&=&p_0 \bfm \cdot [ \theta_0\inttbar_{\Omega_0}\nabla\nabla u\dx]\bfm\\
&=&p_0 \bfm \cdot [-\sum_{i=1}^N \theta_i\inttbar_{\Omega_i}\nabla\nabla u\dx]\bfm,
\eeas
where the second equality is justified by using again \eqref{eq:upp0}. Therefore,
equation~\eqref{eq:bfQi} implies the following restriction on $\bbK$ and $\Theta$\,:
\beqs \label{eq:QiNeccond}
\sum_{i=1}^N \bigg[\theta_0\Tr(\bfQ_i)+\sum_{j=1}^N \theta_j\Tr(\bfQ_j)\bigg]
\theta_i\bfQ_i\ge
\theta_0\sum_{i=1}^N \theta_i\bfQ_i^2+\bigg[\sum_{i=1}^N \theta_i\bfQ_i \bigg]^2, \qquad
\eeqs
where equations~\eqref{eq:overp} and $\theta_0 p_0=-\sum_{j=1}^N \theta_j\Tr(\bfQ_j)$ have been used.
Also, for two symmetric tensors, $\bfM_1\ge\,({\rm resp., }>)\, \bfM_2$ means $\bfM_1-\bfM_2$
is positive semi-definite (resp., positive definite). This convention is followed subsequently.
It is not known in general whether all $\bbK$ and
$\Theta$ satisfying~\eqref{eq:thetaconstraints} and \eqref{eq:QiNeccond}
can be achieved by  periodic E-inclusions.
For many applications, the following question is crucial:

\vspace{2mm}
\noindent{\bf Question 1.} {\em
For what values of $\bbK$ and $\Theta$ can we find a periodic E-inclusion?}
\label{page:question1}

\vspace{2mm}

For some special cases, the answer to Question 1 is known. The
following remark describes such an example.

\begin{remark}\label{rmk:thetaQ}
 In the case  $N=1$, equations~\eqref{eq:thetaconstraints} and \eqref{eq:QiNeccond}
 are equivalent to
 \beqs \label{eq:thetaQ}
\theta\in (0,1)\qquad\aand\qquad \bfQ\ge 0\; \oor \;\bfQ\le 0,
 \eeqs
 where $\theta$ is the volume fraction of the periodic E-inclusion
 constructed from Theorem \ref{thrm:exist} based on Example 2.
 We have suppressed the subscript ``$_1$''.\;
By  $\intbar_Y\Delta u_f \, d\bfx =0 $, we have $\theta=f/(f-\Tr(\bfQ))$.
 Since $f$ can be any positive number,
 $\theta$ can be any number between zero and one.

By replacing $u$ by $a u$  ($a\in \rz$) in the Definition~\ref{def:Ein},
we see that if $\Omega_{per}$ is
a periodic E-inclusion corresponding to $\bfQ$  and $\theta$,
then it  is also a periodic E-inclusion corresponding to $a\bfQ$  and $\theta$.
The case $\bfQ\ge 0$ follows from the case $\bfQ\le 0$ by setting $a=-1$.
\end{remark}

It is often desirable to fix the arbitrary multiplicative constant
associated with the matrix $\bfQ \ne 0$.
 For future convenience, let us rephrase   Remark~\ref{rmk:thetaQ}
as the following theorem.
\begin{theorem}
Let
\label{thrm:QEin}
\beqs \label{eq:bbQ}
\bbQ:=\{X\in \rz^{n\times n}_{sym}:\;X \ge 0 \,\aand \;\Tr(X)=1 \} .
 \eeqs
For any matrix $\bfQ\in \bbQ$ and any
volume fraction $\theta\in (0,\,1)$,
there exists a periodic E-inclusion $\Omega_{per}$, i.e., there is
$u \in  W^{2,2}_{per}(Y)$ satisfying
\beqs \label{problem:over1norm}
\becs
\Delta u=\theta-\chi_{\Omega} &\aae\oon\;Y\\
\nabla\nabla u=-(1-\theta)\bfQ &\aae\oon\;\Omega\\
\mbox{\rm periodic boundary conditions} & \oon\;\partial Y\\
\eecs
\eeqs
where
  $\Omega=Y\cap \Omega_{per}$ and $\theta=|\Omega|/|Y|$.
 Conversely, if the overdetermined problem~\eqref{problem:over1norm}
 has a weak solution $u \in W^{2,2}_{per}(Y)$ for a nonzero matrix
 $\bfQ$, then the matrix $\bfQ$ must belong to $\bbQ$.
\end{theorem}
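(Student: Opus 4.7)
The forward direction is essentially a normalized repackaging of Remark~\ref{rmk:thetaQ}, built on Example~2 and Theorem~\ref{thrm:exist}. Given $\bfQ\in \bbQ$ and $\theta\in (0,1)$, I would set $\bfQ_1 := -(1-\theta)\bfQ$. Since $\bfQ \ge 0$, the matrix $\bfQ_1$ is negative semi-definite, so Example~2 applies and produces a periodic piecewise quadratic obstacle $\phi_{per}$ generated by the single quadratic $q(\bfx)=\tfrac12\bfx\cdot\bfQ_1\bfx$ translated over the lattice $\calL$. I would then apply Theorem~\ref{thrm:exist} with the choice $f=\theta>0$. The resulting minimizer $u=u_f \in W^{2,\infty}_{per}(Y)$ satisfies $\Delta u = f = \theta$ on $N^f_{per}$ and $\nabla\nabla u = \bfQ_1 = -(1-\theta)\bfQ$ on the coincident set $\Omega$. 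Taking traces and using $\Tr(\bfQ_1)=-(1-\theta)$, one reads off $\Delta u = -(1-\theta) = \theta-1$ on $\Omega$, hence $\Delta u = \theta - \chi_{\Omega}$ a.e. on $Y$.

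The value $\theta$ of the volume fraction $|\Omega|/|Y|$ still has to be verified, and this is the one point where care is needed. I would invoke the consistency identity \eqref{eq:fvf}, which for $N=1$ reads $f\theta_0 + \Tr(\bfQ_1)\theta_1 = 0$. With $\theta_0=1-\theta_1$, $f=\theta$ and $\Tr(\bfQ_1) = -(1-\theta)$ this simplifies to $\theta(1-\theta_1)=(1-\theta)\theta_1$, i.e.\ $\theta_1=\theta$. Thus the very choice $f=\theta$ tunes the volume fraction to the prescribed value, and the overdetermined system \eqref{problem:over1norm} holds.

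For the converse, suppose $u\in W^{2,2}_{per}(Y)$ solves \eqref{problem:over1norm} for some nonzero symmetric $\bfQ$. Taking traces in the interior condition $\nabla\nabla u = -(1-\theta)\bfQ$ on $\Omega$ gives $\Delta u = -(1-\theta)\Tr(\bfQ)$ a.e.\ on $\Omega$, while the Poisson equation forces $\Delta u = \theta-1 = -(1-\theta)$ on $\Omega$. Since $\theta<1$ these equalities pin down $\Tr(\bfQ)=1$. To obtain the semi-definiteness, I would invoke the necessary condition \eqref{eq:QiNeccond} derived from the Jensen argument \eqref{eq:bfQi}--\eqref{eq:upp0}. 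For $N=1$ with $\bfQ_1=-(1-\theta)\bfQ$, $\theta_0=1-\theta$, $\theta_1=\theta$, that inequality collapses (after dividing by the common positive factor $\theta(1-\theta)^2$) to the operator inequality $\bfQ \ge \bfQ^2$. Because $\bfQ$ is symmetric, this inequality applied on eigenvectors forces every eigenvalue $\lambda$ to satisfy $\lambda(1-\lambda)\ge 0$, i.e.\ $\lambda\in[0,1]$; combined with $\Tr(\bfQ)=1$ this yields $\bfQ\in\bbQ$.

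The ``hard parts'' of the theorem have already been done upstream: existence and regularity of the obstacle minimizer (Theorem~\ref{thrm:EUforfbv}), the fact that the piecewise-quadratic obstacle in Example~2 is actually admissible, and the Jensen-based necessary condition \eqref{eq:QiNeccond}. The only nontrivial bookkeeping in the present theorem is picking the normalization $(f,\bfQ_1)=(\theta,-(1-\theta)\bfQ)$ so that (i)~Example~2 applies, (ii)~the overdetermined equations in \eqref{problem:over1norm} come out with the correct coefficients, and (iii)~the volume fraction identity \eqref{eq:fvf} returns precisely $\theta_1=\theta$; this triple compatibility is what Theorem~\ref{thrm:QEin} isolates.
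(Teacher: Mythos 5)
Your proof is correct and follows essentially the same route as the paper: the forward direction is the paper's Remark~\ref{rmk:thetaQ} (Example~2 plus Theorem~\ref{thrm:exist}) with the normalization $(f,\bfQ_1)=(\theta,\,-(1-\theta)\bfQ)$ made explicit, and the converse is exactly the paper's argument of taking the trace of the second equation in \eqref{problem:over1norm} and invoking the inequality~\eqref{eq:QiNeccond}. The only difference is that you spell out bookkeeping the paper leaves implicit, namely the volume-fraction identity \eqref{eq:fvf} forcing $\theta_1=\theta$ and the eigenvalue argument showing $\bfQ\ge\bfQ^2$ implies $\bfQ\ge 0$.
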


\noindent Proof.  Only the last statement needs proof, but this follows
immediately by taking the trace of the second equation in
\eqref{problem:over1norm}, and also by using the inequality
\eqref{eq:QiNeccond}.


\subsection{Existence of nonperiodic E-inclusions for $n \ge 3$}

To construct nonperiodic E-inclusions in $\rz^n,\, n\ge 3$,  we use
 the  variational inequality
\beqs \label{eq:fbv1}
G_r(u_r)=\inf_{u\in K_r} \bigg\{G_r(u)\equiv \int_{B_r} \half | \nabla u|^2  d \bfx \bigg\},
\eeqs
where  $B_r$ is the open ball centered at the origin of radius $r$, and
the admissible set is \beqs \label{eq:Kd} K_r=\{v
\in W_0^{1,2}(B_r)\,:\;v\ge \phi \}.
 \eeqs
We use this variational inequality to find $u_r$ and the coincident set,
and then we pass to the limit $r \to \infty$ to establish the existence
of nonperiodic E-inclusions.

Let $q_1,\dots, q_N$ be quadratic functions on $\rz^n$ and  $\calU_1,\dots, \calU_N$
 a disjoint measurable subdivision of $\rz^n$.
As before, we say that $\phi:\rz^n \to \rz$ is a {\bf piecewise quadratic obstacle}
if
\renewcommand{\labelenumi}{(\roman{enumi})}
\begin{enumerate}
\item $\phi\in C^{0,1}(\rz^n)$, $\partial^2\phi/\partial \xi^2>-C$ on $\rz^n$ in the
distributional sense, for all $|\xi| = 1$,
\item $\phi = q_{i}$ on $\calU_{i}$, $i=1,\cdots, N$, and
\item $\phi$ is bounded from above on $\rz^n$, $\max\phi:=\max_{\bfx\in \rz^n} \phi(\bfx)>0$,
and there exists $R_0 >0$ such that $\phi(\bfx)<0 \;\;
\iif\;\;|\bfx| \ge R_0$.
\end{enumerate}
Here, as above,
$\partial/\partial \xi$ denotes the directional derivative.
From the basic theory
({\sc Brezis \& Kinderlehrer~\cite{BrezisKind1974}};
also {\sc Kinderlehrer \& Stampacchia~\cite{KindStamp1980}};
{\sc Friedman~\cite{Friedman1982}}),  we have
\begin{theorem}~\label{thrm:EUforfbv1}
The variational inequality~\eqref{eq:fbv1} with $\phi$ being a piecewise quadratic
obstacle has a unique
minimizer $u_r\in W^{2,\infty}(B_r)\cap W_0^{1,\infty}(B_r)$ for each $r\ge R_0$.
Further, the unique minimizer satisfies
\begin{enumerate}
  \item  $0\le u_r\le \max \phi$  on $B_r$, and
  \item There exists a constant $C>0$, independent of $r$, such that
\beqs \label{eq:Linftybds}
\| \nabla \nabla u_r\|_{L^\infty(B_r)} <C.
\eeqs
\end{enumerate}
\end{theorem}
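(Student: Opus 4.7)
The plan is to combine the direct method with strict convexity for existence and uniqueness, use truncations for the pointwise bounds, and invoke the Brezis--Kinderlehrer regularity for semi-concave obstacles together with a second-difference argument for the $r$-uniform $W^{2,\infty}$ estimate.

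For existence and uniqueness I would first check that $K_r$ is a nonempty, closed, convex subset of $W_0^{1,2}(B_r)$. Nonemptiness is the only subtle point and uses assumption (iii): since $\phi<0$ for $|\bfx|\ge R_0$, the truncation $\phi_+:=\max(\phi,0)$ has compact support in $B_r$ for $r\ge R_0$ and thus lies in $K_r$. Coercivity of $G_r$ on $W_0^{1,2}(B_r)$ is immediate from Poincar\'e's inequality, weak lower semicontinuity is standard, and strict convexity of $G_r$ on $W_0^{1,2}(B_r)$ gives uniqueness. For the pointwise bounds I truncate: $u_r^+:=\max(u_r,0)$ belongs to $K_r$ (wherever $\phi>0$ we have $u_r\ge\phi>0$ so $u_r^+=u_r$; elsewhere $u_r^+\ge 0\ge\phi$; the zero boundary value is preserved) and $|\nabla u_r^+|\le|\nabla u_r|$ a.e.~forces $u_r=u_r^+\ge 0$ by uniqueness. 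The symmetric truncation $\min(u_r,\max\phi)$ delivers $u_r\le\max\phi$, using $\max\phi>0$ to preserve the boundary condition. The $W^{1,\infty}_0$ part of the regularity conclusion is then automatic from the $W^{2,\infty}$ estimate together with the zero trace.

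For each fixed $r$, $W^{2,\infty}(B_r)$ regularity together with the complementarity conditions $-\Delta u_r\ge 0$, $u_r\ge\phi$, $(-\Delta u_r)(u_r-\phi)=0$ a.e.~in $B_r$ are the classical Brezis--Kinderlehrer result for semi-concave obstacles, using that $\partial^2\phi/\partial\xi^2\ge -C$ and that $\phi$ is piecewise quadratic hence locally Lipschitz with bounded Laplacian off a negligible set. The $r$-uniform bound is the main obstacle. My plan is to transfer the semi-concavity of $\phi$ to $u_r$: show that for every unit $\xi\in\rz^n$,
\[
\frac{\partial^2 u_r}{\partial\xi^2}\ge -C\quad\text{distributionally on }B_r,
\]
with the same $C$ as in assumption (i). The standard argument compares $u_r$ with the shifted average $\tfrac12[u_r(\cdot+h\xi)+u_r(\cdot-h\xi)]+\tfrac12 Ch^2$ on an interior subdomain; by semi-concavity of $\phi$ this function lies above $\phi$, and then by minimality of $u_r$ it lies above $u_r$ as well, which is precisely the desired second-difference inequality. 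Combined with superharmonicity $\Delta u_r\le 0$, one then obtains for every unit $\xi$,
\[
\frac{\partial^2 u_r}{\partial\xi^2}=\Delta u_r-\sum_{\eta\perp\xi}\frac{\partial^2 u_r}{\partial\eta^2}\le 0-(n-1)(-C)=(n-1)C,
\]
so each directional second derivative lies in $[-C,(n-1)C]$ a.e., giving \eqref{eq:Linftybds} with a constant depending only on $C$ and $n$.

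The delicate point in the last step is that the shifted average does not satisfy the zero boundary condition, so the second-difference comparison is only immediate on compact interior subsets. I would overcome this by noting that in the annulus $\{R_0\le|\bfx|<r\}$ one has $\phi<0\le u_r$, so $u_r$ is harmonic there and standard interior Schauder/gradient estimates propagate the Hessian bound up to $\partial B_r$ with the same constant, completing the proof.
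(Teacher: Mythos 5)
Your existence, uniqueness and truncation arguments are fine, and they already go beyond the paper, which offers no proof of this theorem at all (it simply cites Brezis--Kinderlehrer, Kinderlehrer--Stampacchia and Friedman as ``basic theory''). The genuine problem is in your uniform Hessian estimate, which is the only nontrivial assertion (independence of $r$). The second-difference comparison is not ``immediate on compact interior subsets'': to conclude that $v(\bfx)=\tfrac12[u_r(\bfx+h\xi)+u_r(\bfx-h\xi)]+\tfrac12Ch^2$ dominates $u_r$ on a subdomain $D$, you must use the least-supersolution (comparison) property of the obstacle problem \emph{on $D$}, and that requires the hypothesis $v\ge u_r$ on $\partial D$ --- minimality of $u_r$ alone never supplies this, on any subdomain. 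But $v\ge u_r$ on $\partial D$ is itself a second-difference (semiconcavity) inequality for $u_r$ at $\partial D$, i.e.\ exactly what you are trying to prove; as written the argument is circular, and in particular the claim that the semiconcavity transfers ``with the same $C$ as in assumption (i)'' is unjustified.

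Your patch near $\partial B_r$ also fails as stated: \emph{interior} Schauder/derivative estimates for harmonic functions scale like $\mathrm{dist}(\bfx,\partial B_r)^{-2}$ and therefore cannot ``propagate the Hessian bound up to $\partial B_r$''; moreover your annulus $\{R_0\le|\bfx|<r\}$ is empty or arbitrarily thin when $r$ is close to $R_0$. The repair needs two ingredients you do not use. First, since $u_r\ge 0$ and $\phi<0$ outside the fixed compact set $K_0:=\{\phi\ge 0\}\subset\subset B_{R_0}$, the coincidence set lies in $K_0$ for \emph{every} $r\ge R_0$, so $u_r$ is harmonic on $B_r\setminus K_0$, a region containing a collar of $r$-independent width $d_0=\mathrm{dist}(K_0,\partial B_{R_0})$ along $\partial B_r$. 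Interior harmonic estimates (with $\|u_r\|_{L^\infty}\le\max\phi$) then bound $|\nabla\nabla u_r|$ on the boundary of a fixed comparison domain such as $D=B_{R_0-d_0/2}$ by some $C_1=C_1(n,\max\phi,d_0)$, which lets the comparison run with the \emph{enlarged} constant $C'=\max(C,C_1)$ and yields the two-sided bound on $D$. Second, in the collar itself one needs a genuine \emph{boundary} estimate: $u_r$ is harmonic there, vanishes on the sphere $\partial B_r$, and is bounded by $\max\phi$, so Schwarz reflection across the sphere (Kelvin transform) or boundary Schauder theory gives a Hessian bound whose constant depends only on $n$, $\max\phi$, $R_0$ and $d_0$, uniformly in $r\ge R_0$ (the curvature $1/r\le 1/R_0$ and the collar width are uniform). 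With these corrections your outline closes, but the final constant depends on $n$, $C$, $\max\phi$ and the geometry of $\{\phi\ge0\}$, not ``only on $C$ and $n$''.
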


Since $u_r$ is a solution of the variational inequality, as we have noted previously,
$u_r$ also satisfies the complementarity conditions
\beqs \label{eq:Deltauf1}
-\Delta u_r\ge 0, \;\; u_r\ge \phi,\;\;\aand\;\; -\Delta u_r(u_r-\phi)=0 \;\;\;\aae\oon\; B_r. \qquad
\eeqs
In particular, letting $\Omega_r$ be  the coincident set,
\beqs \label{eq:uroverd}
\nabla \nabla u_r=\nabla \nabla \phi \qquad\;\;\aae\
 {\rm on}\; \Omega_r.
\eeqs

The limit of the minimizers $u_r$ of problem~\eqref{eq:fbv1}
can be defined as follows.
Let $r_j\to +\infty$ be an increasing sequence.
Let $R> R_0$.  From equation~\eqref{eq:Linftybds}
and $\|u_r\|_{L^\infty(B_R)}<\sup_{B_R}|\phi|$,
 it follows that there is a constant $M$ independent of $r$ such that
\beqs \label{eq:urW2infty}
\|u_r\|_{ W^{2,\infty}(B_R)}\le M.
\eeqs
 Since $u_{r_j}$ is  uniformly bounded in $W^{2,\infty}(B_{R})$,
 there exists $u_\infty\in W^{2,\infty}(B_R)$ such that, up to a subsequence,
\beqs \label{eq:urnweaktoinfty}
u_{r_j} \rightharpoonup u_\infty\; \mbox{ weakly}^\ast\;\;\iin\;\; W^{2,\infty}(B_{R}).
\eeqs
From~\eqref{eq:Deltauf1} and \eqref{eq:urnweaktoinfty}, it is easy to verify that
\beqs \label{eq:Deltauf2}
-\Delta u_\infty\ge 0, \;u_\infty\ge \phi,\;\aand\;
-\Delta u_\infty(u_\infty-\phi)=0 \;\;\aae\;\oon\; B_R. \qquad
\eeqs
In particular, the first two of \eqref{eq:Deltauf2} follow from linearity, while the
third of  \eqref{eq:Deltauf2} is justified by  the uniform convergence of $u_r \to u_{\infty}$.
Since $R$ is arbitrary, $u_\infty\in W^{2,\infty}_{loc}(\rz^n)$ satisfies  \eqref{eq:urnweaktoinfty} and
\eqref{eq:Deltauf2} for any $R> R_0$.

Let $\Omega_{\infty}$ be the coincident set of $u_{\infty}$.
We claim that $u_\infty$ satisfies
\beqs \label{eq:uinftyoverd}
\becs
\Delta u_\infty=\chi_{\Omega_\infty}\Delta \phi\qquad\aae\oon\;\;\rz^n,\\
\nabla \nabla u_\infty=\nabla \nabla \phi\qquad \aae\ {\rm on}\;\; \Omega_\infty ,\\
\limsup_{|\bfx| \to \infty} |\bfx|^{n-2}|u_{\infty}(\bfx)| <  \infty.
\eecs
\eeqs
The first two equations in~\eqref{eq:uinftyoverd} are consequences of
the last equation in \eqref{eq:Deltauf2} and the definition of the
coincident set $\Omega_\infty$.
To justify the last equation,  we consider $u_r \ge 0 $ given by Theorem
\ref{thrm:EUforfbv1}, and extend it to $\rz^n$ by putting $u_r  = 0$
on $\rz^n \setminus B_r$. By testing against a positive test function
supported near $\partial B_r$ we see that $u_r$ is subharmonic on
$\rz^n \setminus B_{R_0}$.  Since $|\bfx|^{2-n}$ is harmonic on
$\rz^n \setminus B_{R_0}$, then
\beq
 w_r(\bfx) = u_r(\bfx) - \max \phi \left(\frac{R_0}{|\bfx|}\right)^{n-2}
\eeq
is also subharmonic on $\rz^n \setminus B_{R_0}$.  In addition $w_r$
is nonpositive on $\partial (\rz^n \setminus B_{R_0})$.  Hence, by
the maximum principle $w_r \le 0$ on $\rz^n \setminus B_{R_0}$.
Thus we have
\beq
0 \le u_r \le  \max \phi \left(\frac{R_0}{|\bfx|}\right)^{n-2}
\eeq
on $\rz^n \setminus B_{R_0}$, completing the argument.

\begin{remark}\label{rmk2} We have chosen to include zero boundary
conditions at infinity in the definition of an E-inclusion in the nonperiodic
case.   This allows isolated ellipsoids to be E-inclusions and permits
a  comparison of our results with the results and conjectures
of {\sc Eshelby} \cite{Eshelby1957, Eshelby1961}.  However, it can
be seen that the theory of variational inequalities delivers the
analog of E-inclusions for bounded regions and with certain
kinds of boundary conditions.
\end{remark}

Recall that the piecewise quadratic obstacle $\phi(\bfx)$ coincides with
one of the quadratic functions $q_1(\bfx),\cdots, q_M(\bfx)$ at each
$\bfx\in \rz^n$. Let $\bbK = \{ \nabla \nabla q_i: i = 1, \dots, M
\} = \{\bfQ_1, \dots, \bfQ_N\}$, the $\bfQ_i$ being distinct
symmetric $n \times n$ matrices. Let $\Omega_i\subset \Omega_\infty$ be the largest open
set where \beq \label{eiinf} \int_{\Omega_i} |\nabla \nabla u_\infty
- \bfQ_i|^2\, d\bfx = 0. \eeq Clearly, the $\Omega_1, \dots,
\Omega_N$ are disjoint and open. It follows from $\max \phi >0$ that
$\Omega = \cup_{i=1}^N \Omega_i$ is nonempty.  We have that \beqs
\label{eq:Eininf}
\Delta u_\infty=0\;\;& \aae \oon&\;\Omega_0=\rz^n\setminus (\cup_{i=1}^N\Omega_i)\qquad\aand \\
\qquad \nabla\nabla u_\infty(\bfx)=\bfQ_i\;\;\;& \aae
\oon&\;\Omega_i \qquad \;\;\qquad \forall\,i=1,\cdots, N.\nonumber
\eeqs

We summarize below.
\begin{theorem}  \label{thm5}
Consider the variational inequality problem~\eqref{eq:fbv1} with a
piecewise quadratic obstacle  $\phi$ and define the limiting
minimizer $u_\infty$ and open disjoint sets $\Omega_i$ ($i=1,\cdots,
N$) as above. Then, the  set $\Omega=\cup_{i=1}^N\Omega_i$ is an
E-inclusion.
\end{theorem}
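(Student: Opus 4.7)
The plan is to leverage the construction just carried out for $u_\infty$ and verify that $\Omega=\cup_{i=1}^{N}\Omega_i$ meets the nonperiodic half of Definition~\ref{def:Ein}. Three things must be checked: (a) $u_\infty\in\calH$; (b) $u_\infty$ is the (unique) weak $\calH$-solution of $\Delta u=\sum_{i=1}^{N}p_i\chi_{\Omega_i}$ on $\rz^n$, with $p_i=\Tr(\bfQ_i)$; and (c) $\nabla\nabla u_\infty=\bfQ_i$ a.e.\ on $\Omega_i$. Item (a) is supplied by the maximum principle argument already in the text, which yields $|u_\infty(\bfx)|\le \max\phi\,(R_0/|\bfx|)^{n-2}$ for $|\bfx|\ge R_0$. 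Item (c) is immediate from the definition of $\Omega_i$ as the largest open subset of $\Omega_\infty$ on which $\int|\nabla\nabla u_\infty-\bfQ_i|^2\,d\bfx=0$. Uniqueness of the $\calH$-solution of $\Delta u=\chi$ for bounded compactly supported $\chi$ has been recorded already via the Newtonian potential, so only (b), i.e.\ the identity $\Delta u_\infty=\sum_{i}p_i\chi_{\Omega_i}$ a.e., requires real work.

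For (b), the point of departure is the relation $\Delta u_\infty=\chi_{\Omega_\infty}\Delta\phi$ a.e., established in~\eqref{eq:uinftyoverd}. Because $\phi$ is piecewise quadratic, off the null set $\cup_j\partial\calU_j$ the pointwise Hessian $\nabla\nabla\phi$ takes values in $\{\bfQ_1,\dots,\bfQ_N\}$. On $\Omega_\infty$ one has $u_\infty=\phi$ with both functions in $W^{2,\infty}_{loc}$, and the standard fact that $\nabla\nabla(u_\infty-\phi)=0$ a.e.\ on $\{u_\infty-\phi=0\}$ gives $\nabla\nabla u_\infty=\nabla\nabla\phi$ a.e.\ on $\Omega_\infty$. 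Hence $\Omega_\infty=\cup_{i=1}^{N}E_i$ up to a null set, where $E_i:=\{\bfx:\nabla\nabla u_\infty(\bfx)=\bfQ_i\}$, and on each $E_i$ we have $\Delta u_\infty=\Tr(\bfQ_i)=p_i$. Together with $\Delta u_\infty=0$ a.e.\ on the complement of $\Omega_\infty$, this produces $\Delta u_\infty=\sum_{i}p_i\chi_{E_i}$ a.e., so (b) reduces to the identification $|E_i\triangle\Omega_i|=0$ for each $i$.

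The main obstacle I anticipate is precisely this identification. The inclusion $\Omega_i\subset E_i$ modulo a null set is automatic by definition. For the reverse, the strategy is to show first that the topological boundary $\partial\Omega_\infty$ has Lebesgue measure zero, so that almost every point of $\Omega_\infty$ lies in its interior, and second that $\mathrm{int}(\Omega_\infty)\cap\mathrm{int}(\calU_j)$ is an open set on which $u_\infty$ coincides with the quadratic $q_j$ identically, and hence is contained in $\Omega_{i(j)}$ where $\bfQ_{i(j)}=\nabla\nabla q_j$. The vanishing of $|\partial\Omega_\infty|$ is the delicate ingredient: it follows from Caffarelli-type free-boundary regularity applied on each piece $\calU_j$ (where $\phi$ is a smooth quadratic), in conjunction with the fact that $\cup_j\partial\calU_j$ is null. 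Once this is in place, the union $\cup_j\bigl(\mathrm{int}(\Omega_\infty)\cap\mathrm{int}(\calU_j)\bigr)\subset \cup_i\Omega_i$ exhausts $\Omega_\infty$ up to null sets, so $E_i\subset\Omega_i$ modulo null sets, completing the verification of (b) and hence of the theorem.
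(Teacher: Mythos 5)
Your overall route is necessarily the same as the paper's: Theorem~\ref{thm5} is stated there as a summary, and its proof \emph{is} the preceding construction --- the limit $u_\infty$ with the complementarity conditions \eqref{eq:Deltauf2}, the maximum-principle decay estimate giving the last line of \eqref{eq:uinftyoverd}, and the definition \eqref{eiinf} of the maximal open sets $\Omega_i$. Your items (a) and (c) reproduce exactly these steps. The genuine difference is your item (b): the paper simply asserts the first line of \eqref{eq:Eininf} (``We have that'') and never argues that $\Delta u_\infty$ vanishes a.e.\ on the residual set $\Omega_\infty\setminus\cup_i\Omega_i$, whereas you correctly isolate this as the crux --- if the measurable set $E_i=\{\nabla\nabla u_\infty=\bfQ_i\}$ exceeded the maximal open set $\Omega_i$ by a set of positive measure with $\Tr(\bfQ_i)\neq 0$, then $u_\infty$ would not solve $\Delta u=\sum_i p_i\chi_{\Omega_i}$ and the verification of Definition~\ref{def:Ein} would collapse. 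So your proposal is more complete than the paper on this point, and your outline (interior points of $\Omega_\infty$ are captured by the open pieces; the free boundary is Lebesgue-null) is the right one.

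Three steps in that outline need repair, though all are fixable. First, $\phi$ is in general \emph{not} in $W^{2,\infty}_{loc}$: the hypothesis on $\phi$ only bounds its second directional derivatives from below, so the distributional Hessian may carry positive singular parts on the interfaces between the $\calU_j$. Hence you cannot apply the ``vanishing of $\nabla\nabla(u_\infty-\phi)$ on $\{u_\infty=\phi\}$'' fact to the pair $(u_\infty,\phi)$; apply it instead piecewise to $u_\infty-q_j\in W^{2,\infty}_{loc}$ on $\Omega_\infty\cap\calU_j$. Second, the $\calU_j$ are only \emph{measurable} in the paper's definition, so ``$\cup_j\partial\calU_j$ is null'' is not free. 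It is true, but only because continuity of $\phi$ forces $\partial\calU_j\subset\cup_{k\neq j}\{q_j=q_k\}$, a finite union of quadric hypersurfaces (taking the $q_j$ pairwise distinct as functions, which is no loss). The same observation gives a stronger and more useful statement: every point outside this closed null set has a whole neighborhood on which $\phi$ coincides with a single $q_j$; this is what actually lets you localize, and it makes the sets ${\rm int}(\calU_j)$ unnecessary. Third, the Caffarelli porosity/measure-zero result for the free boundary applies on a piece only under the nondegeneracy condition $\Tr(\bfQ_{i(j)})<0$; on degenerate pieces, $\Tr(\bfQ_{i(j)})\ge 0$, it does not apply. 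But there no free boundary exists at all: near any coincidence point off the quadric set, $w=u_\infty-q_j\ge 0$ satisfies $\Delta w\le -\Tr(\bfQ_{i(j)})\le 0$ a.e.\ by complementarity, so $w$ is superharmonic, nonnegative, and vanishes at an interior point, whence $w\equiv 0$ on a ball by the strong minimum principle and the point is interior to $\Omega_\infty$. With these three repairs your identification $|E_i\triangle\Omega_i|=0$ goes through, and item (b), hence the theorem, follows.
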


Theorem \ref{thm5} is a counterexample to the naive interpretation
of the original Eshelby conjecture
that the ellipsoid is the only shape in which a constant eigenstrain implies
constant stress in the inclusion (see Lemma \ref{lemma:31}) .
 A more careful interpretation of the Eshelby conjecture
 ({\sc Eshelby~\cite{Eshelby1961}}), including also the hypothesis of
 connectedness of the
 inclusion, can be proved in the framework of
variational inequalities. The details are presented in a separate
publication ({\sc Liu~\cite{LiuEshelbyConjecture}}), see also {\sc Kang \& Milton}~\cite{KangMiltonEC}.

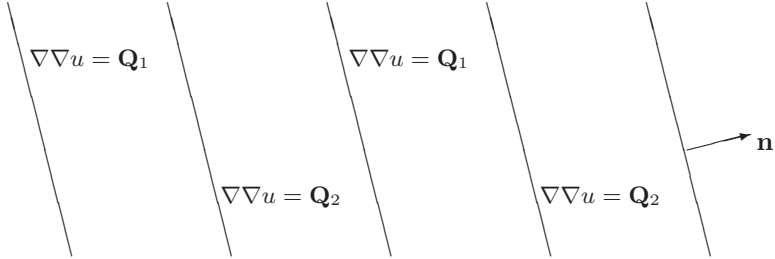
\begin{figure}[ht]
\hspace{0.15in}
\begin{center}

\setlength{\unitlength}{1.4cm}
\begin{picture}(8,3)
\put(1.0,0){\line(-1, 4){0.6}}
\put(0.6,1.8){\small \mbox{$\nabla\nabla u=\bfQ_1$}}
\put(2.5,0){\line(-1, 4){0.6}}
\put(2.4,0.5){\small \mbox{$\nabla\nabla u=\bfQ_2$}}
\put(4.0,0){\line(-1, 4){0.6}}
\put(3.6,1.8){\small \mbox{$\nabla\nabla u=\bfQ_1$}}
\put(5.5,0){\line(-1, 4){0.6}}
\put(5.4,0.5){\small \mbox{$\nabla\nabla u=\bfQ_2$}}
\put(7.0,0){\line(-1, 4){0.6}}
\put(6.78,1.0){\vector(4, 1){0.6}}
\put(7.35,1.0){ \mbox{$\bfn$}}
\end{picture}
\begin{minipage}[t]{12cm}
\caption{\it Simple laminations belong to a special family of periodic E-inclusions.}
 \label{fig:laminate}
\end{minipage}
\end{center}

\end{figure}
\begin{figure}
\begin{center}
\includegraphics[bb=0 0 50 50, viewport=0 0 570 570, scale=0.45]{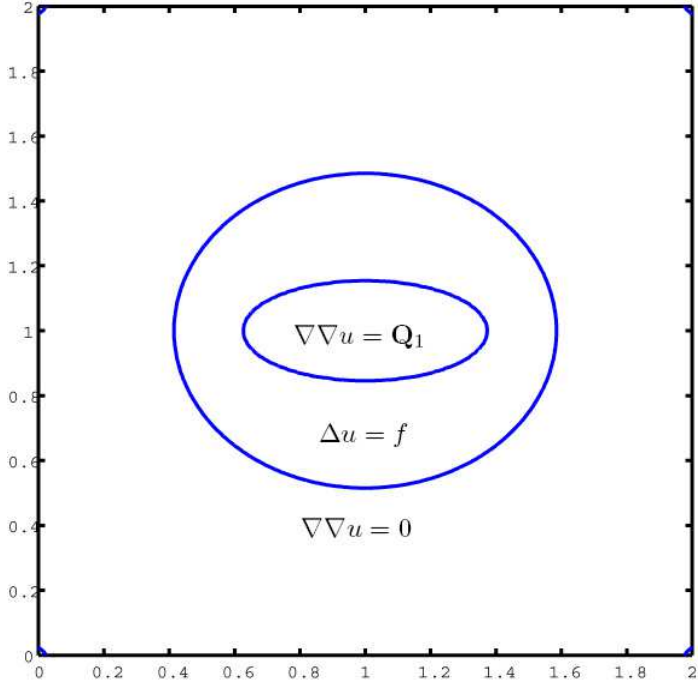}\par
\end{center}
\vspace{0.2cm}
\caption{\it Confocal ellipses are a special family of periodic E-inclusions.} \label{fig:Einconfocal2D}
 \hspace{0.2in}
\end{figure}

\renewcommand{\theequation}{\thesection-\arabic{equation}}
\setcounter{equation}{0}

\section{Examples of periodic E-inclusions} \label{sec:example}


We now consider various examples of  periodic E-inclusions.  From the
discussion above, periodic E-inclusions constructed by
Theorem~\ref{thrm:exist} can be specified by a Bravais lattice
 $\calL$, the quantity $f>0$  and a periodic piecewise
 quadratic obstacle $\phi_{per}$.
It is worthwhile noticing that from the comparison theorem (see e.g. {\sc Friedman}~\cite{Friedman1982}, page~26),
periodic E-inclusions corresponding to a fixed obstacle
satisfy $\Omega_{per}^{f_1} \subset \Omega_{per}^{f_2}$
if $f_2>f_1>0$. Also the interior of any periodic E-inclusion
cannot  contain the singular points of the obstacle on which
$\nabla \nabla \phi_{per}$ is unbounded in distributional sense.
By varying the obstacle $\phi_{per}$ and  $f$,
a large class of periodic E-inclusions can be constructed in any
dimension $n\ge 2$.
 We show a few examples  below.

The first example is a simple lamination. Let $\bfn\in \rz^n$ be a
unit vector, $f>0$,  $a<0$, and $h_{per}(x)=\max\{\half a (x+\nu)^2:\;\nu\in
\gz \}$ for $x \in \rz$.
   Consider the obstacle
\beas \phi_{per}(\bfx)=h_{per}(\bfx\cdot \bfn). \eeas
By the method given above this is a
periodic E-inclusion corresponding to $\bfQ_1=a\bfn\otimes \bfn$ and $\bfQ_2=f\bfn\otimes
\bfn$ with volume fractions $f/(f-a)$ and $-a/(f-a)$, respectively,
see Fig.~\ref{fig:laminate}. In another words a simple lamination
is a periodic E-inclusion.

We now present some numerical examples.
The  coated spheres ({\sc Hashin \& Shtrikman~\cite{HashinShtrikman1962a}}) and confocal ellipsoids
 ({\sc Milton \cite{Milton1980}}), familiar from homogenization theory,
can be constructed as a periodic E-inclusions.
The example in Fig.~\ref{fig:Einconfocal2D} is computed by using the obstacle
\beas
\phi_{per}(\bfx)=\max\{0, \half (\bfx+\bfr)\cdot \bfQ_1(\bfx+\bfr)+h_1:\;\bfr\in \calL\},
\eeas
where $h_1>0$, $\bfQ_1<0$ are appropriately chosen so that the graph
of the obstacle consists of isolated ``mountains'' emerging out of
a horizontal ``sea''. So, if $f$ is large enough,
 the minimizer $u_f$ contacts the mountains around the peaks
and the sea but is detached from the rim of singular points of $\phi_{per}$.
It can be proved that the coincident set in each unit cell is separated by two
confocal ellipsoids, by noticing   the Newtonian potential of a homogeneous solid ellipsoid
is not only quadratic inside the ellipsoid, but also  quadratic outside the ellipsoid on
the equipotential surface which is a confocal ellipsoid, see {\sc Kellogg~\cite{Kellogg1929}}.
On the other hand, if $f$ is very small or the obstacles
of \eqref{eq:phiperexample2} are considered, one obtains the Vigdergauz-type structure
as the coincident set of the variational inequality~\eqref{sec1:fbv1};
see also {\sc Grabovsky   \& Kohn~\cite{GK1995b}} for an analytic derivation of the
Vigdergauz structure. Our constructions generalize immediately to
higher dimensions.

\begin{figure}[ht]

\begin{center}
\includegraphics[bb=0 0 50 50, viewport=0 0 570 570, scale=0.45]{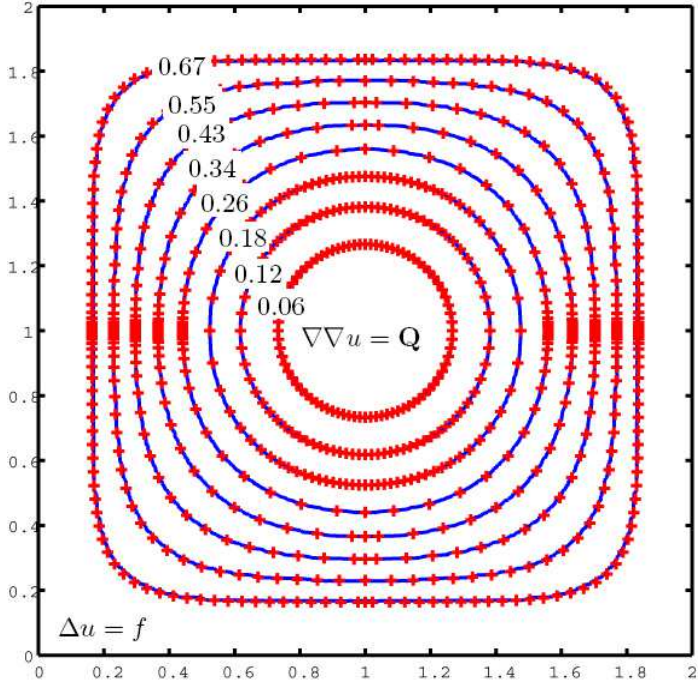}\par

\caption{\it Vigdergauz structures in a square cell
corresponding to $\bfQ=-\diag(1,1)$.
 The solid curves are our numerical results based on the
 variational inequality, and the
 ``+'' signs are the analytic solutions. The inset numbers are the volume fractions
 of the Vigdergauz structures.}
 \label{fig:Einvigtxt}

\end{center}
\end{figure}

We now describe a numerical scheme to solve the variational inequality. A detailed numerical analysis
of variational inequalities been presented in
{\sc Glowinski,  Lions \& Tremoli\`{e}res }\cite{Glowinskietal1981}.
First let us consider  the variational problem~\eqref{sec1:fbv1} with
the constraint $u\ge \phi_{per}$ neglected. Clearly the Euler-Lagrange equation of
this variational problem is the  Poisson equation
\beas
\becs
\Delta u=f&\oon\;\;Y,\\
\mbox{periodic boundary conditions}&\oon\;\;\partial Y,
\eecs
\eeas
which, according to the finite element method (see e.g. {\sc Kwon \& Bang~\cite{KwonBang2000}}), can be
discretized  as
\beqs \label{eq:Khatusmhat}
\Khat \usmhat=\fhat.
\eeqs
Here $\usmhat$, a column vector,  denotes the values of the  potential $u$ at the nodal points
in the finite element model,
 $\Khat$ and $\fhat$ are usually called the {\em stiffness matrix} and {\em loads}, respectively.
 Now let us take into account the  discretized constraint $\usmhat\ge \phihat_{per}$, where
 $\phihat_{per}$ are the values of the obstacle $\phi_{per}$ at the nodal points.
The discrete version of the variational inequality \eqref{sec1:fbv1}  becomes
the following quadratic programming problem:
\beqs \label{eq:quadprog}
\min\{\hat{G}(\usmhat)=-\half \usmhat\cdot \Khat\usmhat +\fhat\cdot \usmhat:\;\usmhat\ge \phihat_{per}\},
\eeqs
which can be easily solved using standard solvers. The following computations use
a uniform mesh with around $10^5$ nodal points. The iterations are terminated when
the relative difference between the values $\hat{G}(\usmhat)$ of two consecutive iterations
is less than $10^{-10}$. With these parameters, the iterations  converge within
a few minutes on a personal computer. The resulting periodic E-inclusion includes all
nodal points on which $|\usmhat-\phihat|$ is less than  $a\times 10^{-3}$, where $a$
is at the order of $1$.

The numerical scheme is verified by comparing the  results with the analytic
solutions for the Vigdergauz structures in two dimensions with a square unit cell
and with $\bfQ=-\diag(1,1)$.  The volume fractions were chosen to be,
from inward to outward, $0.06,\;0.12$, $0.18,\;0.26$,
$0.34, \; 0.43$, $0.55,\;0.67$.
In Fig.~\ref{fig:Einvigtxt} the solid blue curves are the numerical results while the
red ``+'' signs denote the analytic solutions. There is good agreement
between the analytical shapes and our calculated shapes.
As is well-known from the Vigdergauz construction,  E-inclusions are asymptotic to a
circle at small volume fraction and to the unit square at volume fractions approaching one.

\begin{figure}
\begin{center}
\includegraphics[bb=0 0 50 50, viewport=0 0 375 375, scale=0.70]{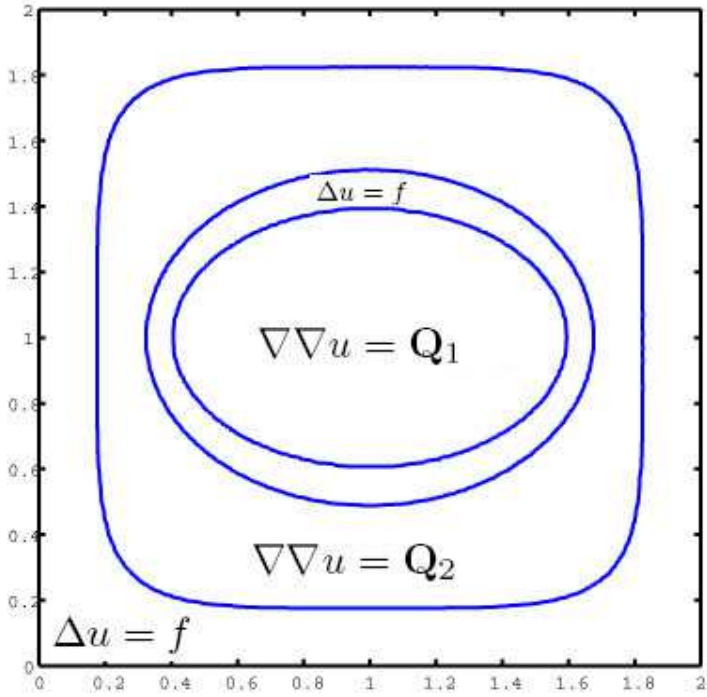}

\caption{\it A periodic E-inclusion in the case $N=2$ with two  components corresponding to
$\bfQ_1=-\diag(1,1)$ and $\bfQ_2=-\diag(2,3)$, and volume fractions $0.19$
and $0.65$, respectively.} \label{fig:Einmulticoated}
\end{center}
\end{figure}

It should be noticed that a periodic  E-inclusion may not look like an ``inclusion'' at all.
Figure~\ref{fig:Einconfocal2D} shows such an example,  the E-inclusion being
the interior of the inner ellipse and the exterior of the  outer ellipse.
A more general example is shown in Fig.~\ref{fig:Einmulticoated}.
This example is calculated using the obstacle
\beas
\phi_{per}(\bfx)=\max\{\half (\bfx+\bfr)\cdot\bfQ_1(\bfx+\bfr),\hspace{3cm}\\
\half (\bfx+\bfr)\cdot\bfQ_2(\bfx+\bfr)+h_2:\;\bfr\in 2\gz^2-\bfd \},
\eeas
where \beas
\bfQ_1=-\diag(1,1),\;\;
\bfQ_2=-\diag(2,3),\;\;
\bfd=(1,1)\;\;\aand\;\;h_2=0.2.
\eeas
The periodic E-inclusion has two components: one consists of the inner  region
 corresponding to $\bfQ_1$ and volume fraction $0.19$,
and the other is the squarish annulus  corresponding to  $\bfQ_2$ and volume fraction
 $0.65$. This type of structure can be regarded as a generalization of multi-coated
spheres ({\sc Lurie \& Cherkaev \cite{LurieCherkaev1985}}) in the periodic setting.

\begin{figure}[ht]
\begin{center}
\includegraphics[bb=0 0 50 50, viewport=0 37 400 495, scale=0.60]{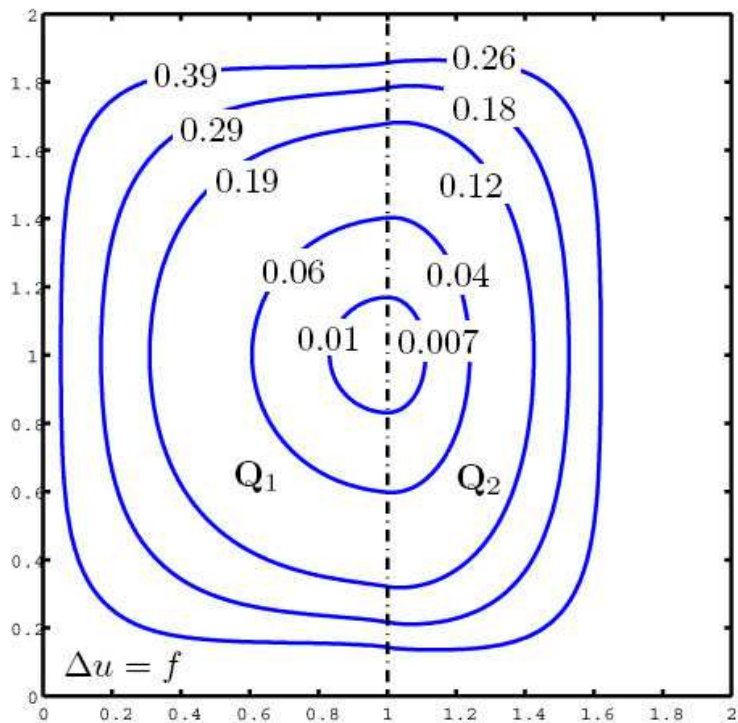}
\vspace{0.5cm}
\caption{\it A sequence of periodic E-inclusions with $N = 2$, $\bfQ_1=-\diag(1,1)$ and $\bfQ_2=-\diag(2,1)$.
In this case $\bfQ_1$ and $\bfQ_2$ differ by a rank-one
matrix and the two parts of E-inclusion are separated by a plane.
The volume fractions from inward to outward are $(0.01, 0.007)$,
$(0.06, 0.04)$, $(0.19, 0.12)$, $(0.29, 0.18)$,
$(0.39, 0.26)$. }
 \label{fig:Einlam}

\end{center}
\end{figure}

An interesting scenario is plotted in
Fig.~\ref{fig:Einlam}. Periodic E-inclusions in this figure,
 corresponding to two different matrices $\bfQ_1$ and $\bfQ_2$,
have nevertheless only one connected component in one unit cell.
Periodic E-inclusions of this kind can be
constructed by using  the obstacle
\beas 
\phi_{per}(\bfx)=
\max\{ P(\bfx+\bfr):\;\bfr\in \calL\},
\eeas
where
\beas
P(\bfx)=
\becs
\half \bfx\cdot\bfQ_1\bfx&\iif\;\bfx\cdot \bfn<0,\\
\half \bfx\cdot\bfQ_2\bfx&\iif\;\bfx\cdot \bfn\ge 0,\\
\eecs
\eeas
 $\bfQ_1,\,\bfQ_2< 0$ and
$\bfQ_1-\bfQ_2=b\bfn\otimes \bfn$ for some $b\in \rz$ and unit vector $\bfn\in \rz^n$.
Inside such a periodic E-inclusion, there is a plane interface
with normal $\bfn$ that
separates  $\nabla\nabla u=\bfQ_1$ and  $\nabla\nabla u=\bfQ_2$.
 Figure~\ref{fig:Einlam} is plotted by using
\beas
\bfQ_1=-\diag(1,1)
\qquad \aand\qquad
 \bfQ_2=-\diag(2,1).
\eeas
 The periodic E-inclusions
corresponding to $(\bfQ_1, \bfQ_2)$, from inward to outward,
 have volume fractions
$(0.01, 0.007)$, $(0.06, 0.04)$, $(0.19, 0.12)$, $(0.29, 0.18)$,
$(0.39, 0.26)$.

\begin{figure}

\begin{center}
\includegraphics[bb=0 0 50 50, viewport=0 0 570 570, scale=0.5]{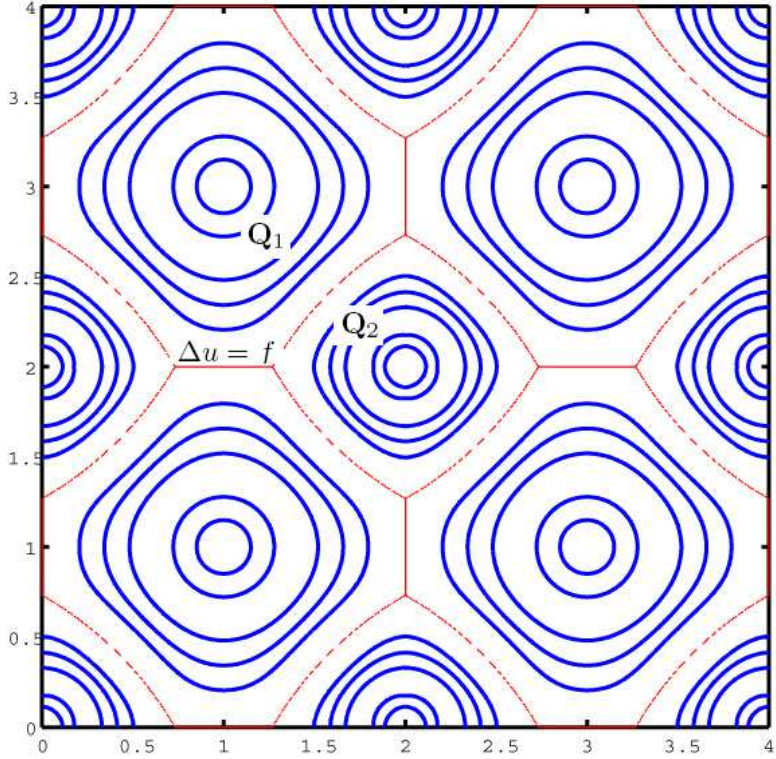}\par

\caption{\it A sequence of periodic E-inclusions with $N=2$,
$\bfQ_1=-\diag(1,1)$ and $ \bfQ_2=-\diag(2,2)$.
The volume fractions from inward to outward are
$(0.02,\; 0.01)$, $(0.07,\;0.03)$, $(0.22,\;0.09)$, $(0.33,\;0.14)$,
$(0.45,\;0.19)$.  The figure shows four unit cells.} \label{fig:Eininfty}

 \end{center}
\end{figure}

 We can construct  periodic E-inclusions with multiple components of
 a very different topology from Fig.~\ref{fig:Einmulticoated}.
Consider the obstacle
\beqs \label{phiY2}
\phi_{per}(\bfx)=
\max\{\half (\bfx-\bfd_i+\bfr)\cdot\bfQ_i(\bfx-\bfd_i+\bfr) :\qquad \nonumber \\
\;i=1,\cdots, N;\;\bfr\in \calL\},
\eeqs where $\bfd_1,\cdots, \bfd_N\in \rz^n$.
Figure~\ref{fig:Eininfty} shows examples of this kind,
corresponding to
 $\calL=2 \gz^2$, $N=2$, and
 \beas
\bfQ_1=-\diag(1,1),
\;\;
\bfQ_2=-\diag(2,2),
\;\;
  \bfd_1=[1,\;1],
\;\;   \bfd_2=[2,\;2].
\eeas
Note that four unit cells are plotted in the figure. Each
periodic E-inclusion has two components in one unit cell  corresponding to
$\bfQ_1$ and $\bfQ_2$, respectively.
The volume fractions, from inward to outward,
are $(0.02,\; 0.01)$, $(0.07,\;0.03)$, $(0.22,\;0.09)$, $(0.33,\;0.14)$,
$(0.45,\;0.19)$. The red curves delimit the singular
points of the obstacle which can never intersect the interior of
an E-inclusion.
Thus, the boundaries of the E-inclusions approach the red curves
since the total
volume fractions of the E-inclusions approach  $1$
as  $f\to +\infty$, as is implied by equation~\eqref{eq:fvf}.

 \begin{figure}[ht]
 \begin{center}
\includegraphics[bb=0 0 50 50, viewport=20 1 570 570,  scale=0.5]{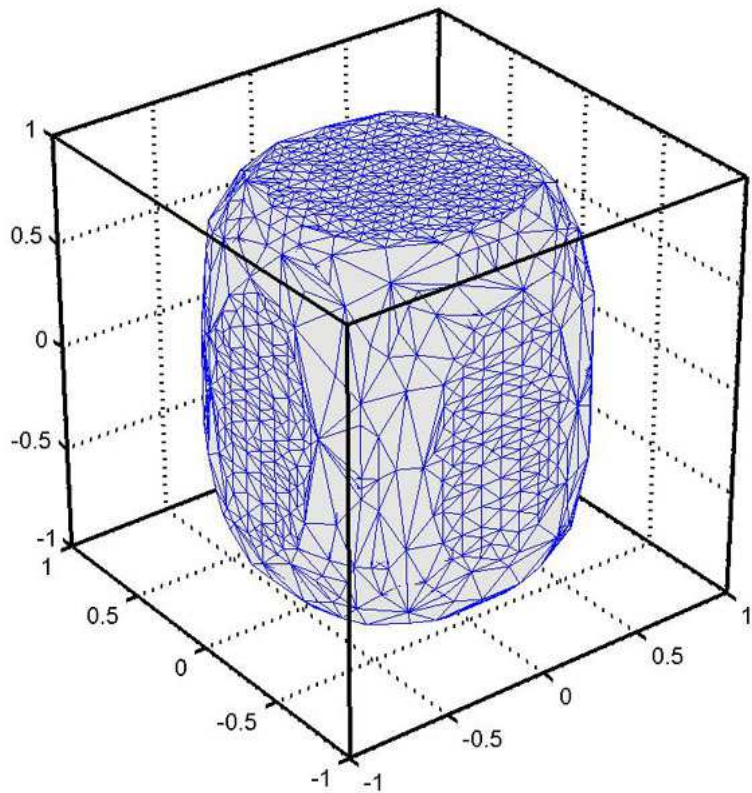}\par

\caption{\it A periodic E-inclusion corresponding to  $N = 1$,  $\bfQ_1 = \bfQ=-\diag(3,3,1)$,
and volume fraction $0.37$. The truncated parts are nearly flat.} \label{fig:Ein3DQ1}

\end{center}
\end{figure}

\begin{figure}
 \begin{center}
\includegraphics[bb=0 0 50 50, viewport=1 1 570 570, scale=0.55]{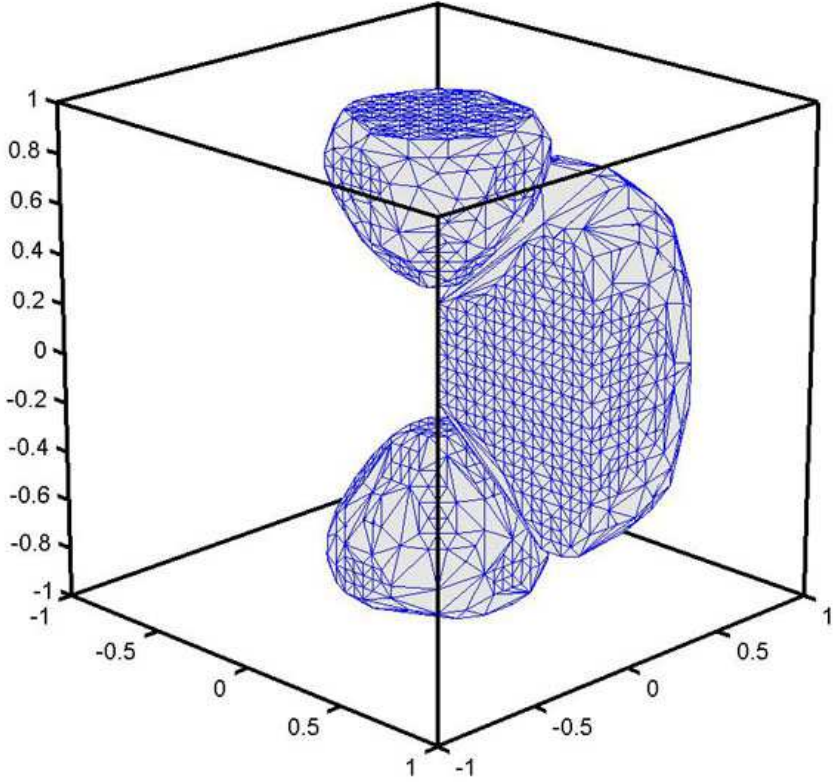}\par
\begin{minipage}[t]{12.0cm}
\caption{\it A periodic E-inclusion with $N = 3$ having three components in the unit cell
and $\bfQ_1=-\diag(1,1,1)$, $\bfQ_2=\bfQ_3=-\diag(3,3,1)$.
The top and bottom components corresponding to $\bfQ_2$ and $\bfQ_3$
 are mirror symmetric and have
the same volume fraction $0.03$, and the middle component has volume fraction
$0.35$. Only one fourth of the middle component
is plotted in the figure. The full middle component is shown separately
in Fig.~\ref{fig:Ein3DQ1Q2Q3Q1} } \label{fig:EinQ1Q2Q3}
\end{minipage}
\end{center}
\end{figure}

 \begin{figure}
 \begin{center}
\includegraphics[bb=0 0 50 50, viewport=20 1 570 570,  scale=0.55]{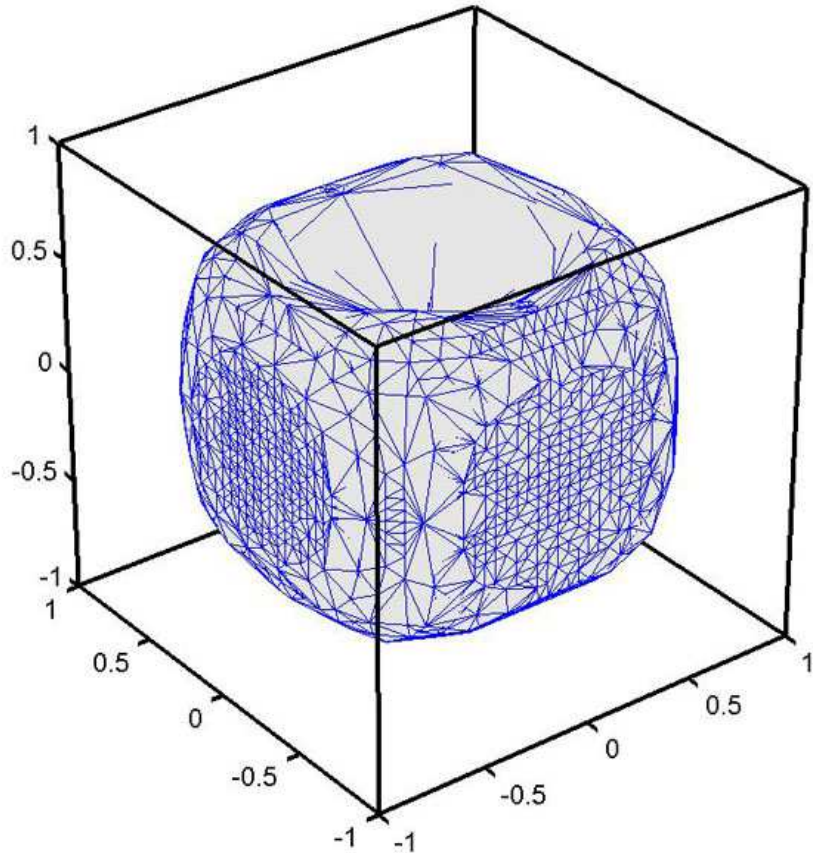}\par

\caption{\it The middle component of the periodic E-inclusion in Fig.~\ref{fig:EinQ1Q2Q3}
shown separately.  There is a depression at the top and bottom but the
shape is simply connected.} \label{fig:Ein3DQ1Q2Q3Q1}

\end{center}
\end{figure}

\begin{figure}
\begin{center}
\includegraphics[bb=0 0 50 50, viewport=1 1 570 600, scale=0.50]{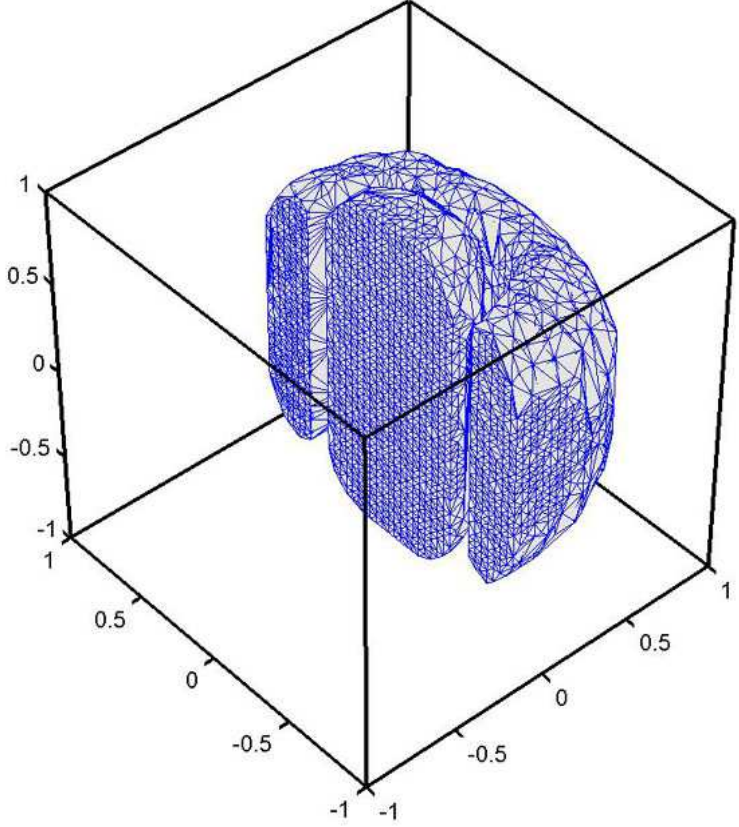}\par

\caption{\it A periodic E-inclusion for the case $N=2$
with one component surrounding the other in the unit cell.
Only  half of the E-inclusion is shown.
The inner and outer components correspond to matrices $\bfQ_2=-\diag(3,3,1)$, $\bfQ_1=-\diag(1,1,1)$
and have volume fractions $(0.11, 0.40)$, respectively.
 See Fig.~\ref{fig:Ein3Dconfocal2}
for top view.} \label{fig:Ein3Dconfocal}

\end{center}
\end{figure}

\begin{figure}
\begin{center}
\includegraphics[bb=0 0 50 50, viewport=1 0 500 500, scale=0.50]{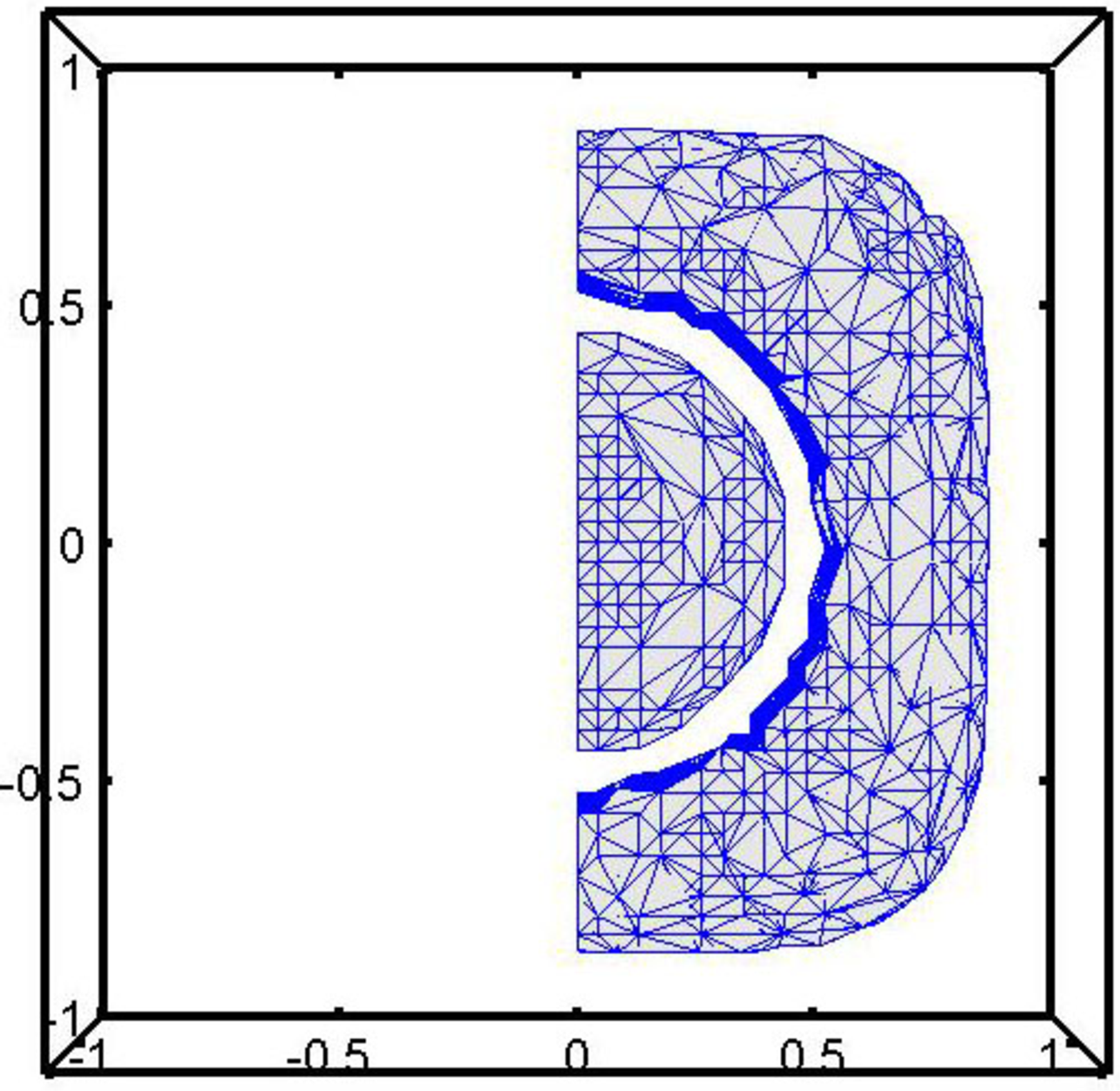}\par
\end{center}
\caption{\it Top view of the periodic
 E-inclusion in Fig.~\ref{fig:Ein3Dconfocal}. As in Fig.
 \ref{fig:Ein3Dconfocal} only half of the inclusion is shown. } \label{fig:Ein3Dconfocal2}
\end{figure}

The numerical scheme~\eqref{eq:Khatusmhat} can also be carried out in three dimensions.
The meshes used in three dimensions are not as dense as those in two dimensions. So,
the computed E-inclusions are less smooth than those in two dimensions.
 Simplified computations  have been performed
 and approximate periodic 3-D E-inclusions with cubic symmetry have been given
  in {\sc Liu, James \& Leo~\cite{LiuJamesLeo2007}}. There it was noted that some
   periodic E-inclusions
 are well approximated by {\it generalized ellipsoids} defined by
  \beas
GE(\alpha)=\{(x_1,x_2, x_3):\;\frac{x_1^{\alpha}}{a_1^{\alpha}}+\frac{x_2^{\alpha}}{a_2^{\alpha}}+\frac{x_3^{\alpha}}{a_3^{\alpha}}\le 1\}.
  \eeas
We then optimized the index ${\alpha}$ such that $GE({\alpha})$ is the best approximation according
to certain criterion. This formula can interpolate an ellipsoid and a cube.
In present approach no assumption are made about the shape of the E-inclusions
to be calculated.

Not surprisingly,  the different scenarios represented  in figures~\ref{fig:laminate}-\ref{fig:Eininfty}
are all realizable in three dimensions.
 Three typical examples are selected here.
In Fig.~\ref{fig:Ein3DQ1} a periodic E-inclusion
is calculated in the cubic unit cell $(-1,\,1)^3$ with  the obstacle~\eqref{eq:phiperexample2}
 and $\bfQ=-\diag(3,3,1)$.  The volume fraction of this E-inclusion is $0.37$. The tendency of
the boundaries of the E-inclusion to become flatter when they come closer to each other
is more obvious in three dimensions. The mesh in this and following figures does not represent
the actual mesh used in the computation but is merely used for visualization.

A three-component periodic E-inclusion is plotted in Fig.~\ref{fig:EinQ1Q2Q3}. It
is  calculated using the obstacle
\beas \label{phiY2}
\phi_{per}(\bfx)=\max\{\half (\bfx+\bfr+\bfd_i)\cdot\bfQ_i(\bfx+\bfr+\bfd_i)
:\;i=1,2,3,\;\bfr\in 2\gz^3 \},
\eeas
 where $\bfQ_1=-\diag(1,1,1)$, $\bfQ_2=\bfQ_3=-\diag(3,3,1)$, $\bfd_1=(0,0,0)$,
$\bfd_2=(0,0, 0.5)$ and $\bfd_3=(0,0,-0.5)$. The top and bottom components
corresponding to $(\bfQ_2,\bfQ_3)$ have
the same volume fraction $0.03$ and the middle component corresponding to $\bfQ_1$  has volume fraction $0.35$.
Note that only one fourth of the middle component is plotted in Fig.~\ref{fig:EinQ1Q2Q3}.
The middle component is plotted separately in Fig.~\ref{fig:Ein3DQ1Q2Q3Q1}.
A final example is shown in Fig.~\ref{fig:Ein3Dconfocal}, which is calculated with
the obstacle \beas
\phi_{per}(\bfx)=\max\{\half (\bfx+\bfr)\cdot\bfQ_1(\bfx+\bfr),\;
\half (\bfx+\bfr)\cdot\bfQ_2(\bfx+\bfr)+h_2:\;\bfr\in 2\gz^3 \},
\eeas
where  $\bfQ_1=-\diag(1,1,1)$, $h_2=0.2$ and $\bfQ_2=-\diag(3,3,1)$.
Only  half of the E-inclusion is plotted in this figure.
The inner and outer components in the figure correspond to $(\bfQ_2, \bfQ_1)$ and have volume fractions
$(0.11,0.40)$, respectively. The top view is shown in Fig.~\ref{fig:Ein3Dconfocal2}.

\renewcommand{\theequation}{\thesection-\arabic{equation}}
\setcounter{equation}{0}
\section{Applications } \label{sec:appl}



In this section, we use periodic E-inclusions to
solve problems for two-phase composites.
From Definition~\ref{def:Ein} a periodic
E-inclusion is associated to matrices $\bbK$ and volume fractions $\Theta$.
In general  periodic E-inclusions having $N' \le N$ distinct
matrices in $\bbK$ can be  used to solve problems for $(N'+1)$-phase composites.
For two-phase composites we need only periodic E-inclusions with
$\bbK = \{\bfQ, \bfQ, \dots, \bfQ \}$.
Applications of periodic E-inclusions to multi-phase composites
 are presented in a separate publication ({\sc Liu~\cite{LiuMultiphase}}).

\subsection{Periodic Eshelby inclusion problems and  effective
properties of two-phase composites} \label{sec:hominhom}

Our first observation is that some effective properties of
composites having one phase made with periodic E-inclusions can be
easily calculated. Let $\bbL$ be the collection of all symmetric
tensors $\bfL:\rz^{n\times n}\to \rz^{n\times n}$ which are either
positive definite or elasticity tensors. We consider a periodic
two-phase composite defined by \beqs \label{eq:bfLXI}
\bfL(\bfx,\Omega)=\left\{
\begin{array}{ll}
  \bfL_1\in \bbL  & \qquad \bfx\in \Omega, \\
  \bfL_0\in \bbL & \qquad \bfx\in Y\setminus \Omega, \\
 \end{array}\right.
  \eeqs
where the notation is as above, $\Omega \subset Y$ is measurable,
and $Y \subset \rz^n$ is an open unit cell.


 Consider the minimization problem
\beqs \label{Castdef:min}
J_\ast( \bfL,\, \bfF,\, \Omega)=
\min \bigg\{
\half\inttbar_Y (\nabla \bfv +\bfF)\cdot
\bfL(\bfx,\Omega)(\nabla \bfv+\bfF)\dx:\qquad \\
\bfv\in W^{1,2}_{per}(Y;\rz^n)
\bigg\}. \nonumber
\eeqs
Physically,  in the case of linearized elasticity
 $J_\ast( \bfL,\, \bfF,\, \Omega)$ is the elastic
energy density induced by an applied average strain $\bfF\in \rz^{n\times n}$.
The effective  tensor  $\bfL^e(\Omega)$ is defined as ({\sc Christensen~\cite{Christensen1979}})
  \beqs \label{Castdef}
\half  \bfF\cdot \bfL^e(\Omega)\bfF=J_\ast( \bfL,\, \bfF,\, \Omega)
 \qquad \forall\,\bfF\in \rz^{n\times n}.
 \eeqs
From standard arguments in the calculus  of variations  ({\sc Evans~\cite{Evans1998}}),  a minimizer
 of problem~\eqref{Castdef:min} exists and necessarily satisfies
the Euler-Lagrange equation
\beqs \label{ELuI}
\becs
\diverg\big[\bfL(\bfx, \Omega) ( \nabla
\bfv+\bfF)\big]=0\qquad
& \oon \;Y,\\
\mbox{periodic boundary conditions} &\oon \;\partial Y.\\
\eecs
\eeqs
We are interested in calculating the effective  tensor $\bfL^e(\Omega)$.
Problem~\eqref{ELuI} is referred to as
the {\em inhomogeneous} Eshelby inclusion  problem
in a periodic setting (cf., equation \eqref{problem:inhomrz}).

The relation between periodic E-inclusions and problem~\eqref{ELuI}
can be uncovered  by adapting a well-known  argument of {\sc Eshelby~\cite{Eshelby1957}}.
   We begin with the {\em homogeneous} Eshelby inclusion problem,
 \beqs \label{homellipsys}
\becs \diverg\big[\bfL_0 \nabla \bfv+\bfP\chi_{\Omega}\big]=0\qquad
& \oon \;Y,\\
\mbox{periodic boundary conditions} &\oon \;\partial Y,\\
\eecs
\eeqs
where  $\bfP\in \rz^{n\times n}$ is given and  $\bfv \in W^{1,2}_{per}(Y, \rz^n) $ is the unknown.
Equation (\ref{homellipsys}) is understood in the sense of distributions.
 Below, we
sometimes write $\bfv(\bfx, \bfP)$ to emphasize the (linear) dependence of
$\bfv$ on $\bfP$.
Further, motivated by the convenient property of ellipsoids employed by Eshelby,
we assume that $\Omega$ and $\bfP$ are such that there is a solution $\bfv$ of
the homogeneous problem~\eqref{homellipsys}
satisfying
\beqs \label{problem:over5}
\nabla \bfv(\bfx, \bfP)=-(1-\theta)\bfR \bfP\; &\oon \;\Omega,
\eeqs
where $\theta=|\Omega|/|Y|$ is the volume fraction of the inclusion,
and the linear mapping
\beqs \label{eq:bfRdef}
\bfR \bfP = \frac{-1}{1-\theta}\,\inttbar_\Omega \nabla \bfv(\bfx, \bfP)\dx
\eeqs
is  symmetric and depends on $\Omega$ (for the symmetry, see \eqref{eq:RFourier}).
 From equations~\eqref{homellipsys} and \eqref{eq:bfRdef}, it follows that
 \beqs \label{eq:elenergyP}
 \inttbar_Y \nabla \bfv(\bfx,\bfP) \cdot\bfL_0 \nabla \bfv(\bfx,\bfP) d\bfx
 =\theta(1-\theta)\bfP\cdot \bfR\bfP \ge 0\;\; \forall\;\bfP\in\rz^{n\times n}.\qquad
 \eeqs
Together with equation~\eqref{problem:over5}
 and following the  Eshelby's argument,
 we now observe that a solution of problem~\eqref{homellipsys}
 also solves problem~\eqref{ELuI} under restrictions given below.
 To see this, let us formally rewrite equations~\eqref{homellipsys} and~\eqref{ELuI} in a less concise form as
 \beqs \label{jumpform1}
\left\{ \begin{array}{ll}
  \diverg[\bfL_0\nabla \bfv]=0 & \iin\; Y\setminus \Omega, \\
 \diverg[\bfL_0\nabla \bfv]=0 & \iin\;  \Omega, \\
  \jumpl \bfL_0\nabla \bfv+\bfP\chi_\Omega \jumpr  \bfn=0\qquad & \oon\; \partial \Omega, \\
 \end{array}\right.
\eeqs and
\beqs \label{jumpform2}
\left\{ \begin{array}{ll}
  \diverg[\bfL_0\nabla \bfv]=0 & \iin\;Y\setminus \Omega, \\
 \diverg[\bfL_1\nabla \bfv]=0 & \iin\;  \Omega, \\
  \jumpl \bfL(\bfx,\Omega)(\nabla\bfv+\bfF) \jumpr \bfn=0 \qquad & \oon\; \partial \Omega, \\
 \end{array} \right.
 \eeqs
 respectively,
 where $\jumpl \cdot \jumpr$ denotes the jump across the $\partial \Omega$.
By matching the jump conditions in~\eqref{jumpform1}
and~\eqref{jumpform2}, direct calculations show that if $\bfv$
satisfies all equations in~\eqref{jumpform1} and
equation~\eqref{problem:over5}, then $\bfv$ also satisfies all
equations in~\eqref{jumpform2} for  $\bfF$ satisfying \beqs
\label{FPeq} \dbfL\bfF=(1-\theta)\dbfL\bfR\bfP-\bfP, \eeqs where
$\dbfL=\bfL_0-\bfL_1$.  Properly interpreted, this formal argument
can be made rigorous. More specifically, the weak form of
\eqref{homellipsys} is \beqs \label{eq:weakhomo} \inttbar_Y
(\bfL_0\nabla \bfv+\bfP\chi_\Omega)\cdot\nabla \bfw \dx=0\qquad
\forall\;\bfw\in W^{1,2}_{per}(Y;\rz^n). \eeqs By equations
\eqref{problem:over5} and \eqref{FPeq}, equation \eqref{eq:weakhomo}
can be rewritten as \beas \label{eq:weakinhomo} \inttbar_Y
[\bfL_0\nabla \bfv-\dbfL(\nabla \bfv+\bfF)\chi_\Omega]\cdot\nabla
\bfw \dx=0\qquad \forall\;\bfw\in W^{1,2}_{per}(Y;\rz^n), \eeas
which is exactly the weak form of \eqref{ELuI}.
Also, the energy of the inhomogeneous problem~\eqref{ELuI}  can be conveniently written as
 \beqs \label{energy:inhom}
2J_\ast(\bfL,\bfF,\Omega)&= &
 \inttbar_Y  (\nabla \bfv+\bfF)\cdot \bfL(\bfx,\Omega)(\nabla \bfv+\bfF)\dx \nonumber\\
&=&\inttbar_Y  \bfF\cdot \bfL(\bfx,\Omega)(\nabla \bfv+\bfF)\dx \nonumber \\
&=&\inttbar_Y  \bfF\cdot \bfL_0\bfF\dx-
\inttbar_Y  \bfF\cdot \dbfL(\nabla \bfv+\bfF)\chi_\Omega\dx \nonumber\\
&=& \bfF\cdot \bfL_0\bfF+\theta \bfP\cdot \bfF,
    \eeqs
 where
$\bfF$ and $\bfP$ are related by equation~\eqref{FPeq}.


As emphasized above, equation~\eqref{problem:over5}
is not true unless the inclusion $\Omega$
is very special. We now show that periodic E-inclusions given by Theorem~\ref{thrm:QEin}
are indeed  such special inclusions in many interesting situations. First we explain
the relation between the scalar and vector-valued problems.

\begin{lemma} \label{lemma:31}
Let
$u\in W^{2,2}_{per}(Y)$ be a distributional solution of problem
\beqs \label{problem:poisson}
\becs
\Delta u=\theta-\chi_{\Omega} &\oon\;Y,\\
\mbox{\rm periodic boundary conditions} & \oon\;\partial Y.\\
\eecs \eeqs
Denote by $\delta_{ij}$ ($i,j=1,\cdots, n$) the components of the identity
matrix $\bfI\in \rz^{n\times n}$.
 If\; $\bfL_0\in \bbL$, and
\beqs \label{eq:bfL2} (\bfL_0)_{piqj}=\mu_1
\delta_{ij}\delta_{pq}+\mu_2\delta_{pj}\delta_{iq}+\lambda
\delta_{ip}\delta_{jq}, \eeqs then
 \beqs \label{eq:bfuP1}
\bfv(\bfx, \bfP)=\frac{\bfP\nabla u(\bfx)}{\lambda+\mu_1+\mu_2}
 \eeqs
solves problem~\eqref{homellipsys} for $\bfP=\bfI$.  If $\mu_2 + \lambda = 0$
 then $\bfv$ defined by \eqref{eq:bfuP1} solves
 problem~\eqref{homellipsys} for every $\bfP \in \rz^{n \times n}$.
\end{lemma}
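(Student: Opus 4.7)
The plan is to do a direct distributional calculation: the special structure of $\bfL_0$ reduces $\diverg(\bfL_0 \nabla \bfv)$ to an expression involving only $\Delta(\cdot)$ and $\partial_p(\diverg\,\cdot)$, both of which evaluate explicitly on the ansatz via the Poisson equation $\Delta u = \theta - \chi_\Omega$. First I would expand $(\bfL_0 \nabla \bfv)_{pi} = (\bfL_0)_{piqj} v_{q,j}$ using \eqref{eq:bfL2}. The three Kronecker terms collapse to
$$(\bfL_0 \nabla \bfv)_{pi} = \mu_1 v_{p,i} + \mu_2 v_{i,p} + \lambda\, \delta_{ip} \diverg \bfv,$$
so that taking row-wise divergence yields the familiar Navier/Lam\'e-type operator
$$[\diverg(\bfL_0 \nabla \bfv)]_p = \mu_1 \Delta v_p + (\mu_2+\lambda)\, \partial_p(\diverg \bfv).$$

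Next I would plug in the ansatz $v_p = C^{-1} P_{pk} u_{,k}$ with $C := \lambda + \mu_1 + \mu_2$. In the distributional sense,
$$\Delta v_p = C^{-1} P_{pk} (\Delta u)_{,k} = -C^{-1} P_{pk} (\chi_\Omega)_{,k}, \qquad \diverg \bfv = C^{-1} P_{qk} u_{,kq}.$$
For the first claim ($\bfP = \bfI$) the mixed-derivative term simplifies because $\delta_{qk} u_{,kq} = \Delta u = \theta - \chi_\Omega$, so $\partial_p(\diverg \bfv) = -C^{-1}(\chi_\Omega)_{,p}$; combining gives $[\diverg(\bfL_0 \nabla \bfv)]_p = -(\mu_1+\mu_2+\lambda)C^{-1}(\chi_\Omega)_{,p} = -(\chi_\Omega)_{,p}$, which exactly balances $[\diverg(\bfI\chi_\Omega)]_p = (\chi_\Omega)_{,p}$, verifying \eqref{homellipsys}.

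For the second claim, the obstruction to general $\bfP$ is the term $(\mu_2+\lambda)\partial_p(\diverg \bfv)$, which involves mixed second derivatives $u_{,kq}$ that cannot be simplified using only $\Delta u = \theta - \chi_\Omega$. Imposing $\mu_2+\lambda = 0$ kills this term and reduces $C$ to $\mu_1$, so $[\diverg(\bfL_0 \nabla \bfv)]_p = \mu_1 \Delta v_p = -P_{pk}(\chi_\Omega)_{,k}$, again cancelling $[\diverg(\bfP\chi_\Omega)]_p = P_{pk}(\chi_\Omega)_{,k}$. The main (and essentially only) point of care is the distributional meaning of $(\Delta u)_{,k}$, which is harmless since $\Delta u = \theta - \chi_\Omega \in L^\infty(Y)$ and all manipulations take place inside the standard weak formulation of \eqref{homellipsys} tested against $\bfw \in W^{1,2}_{per}(Y;\rz^n)$; periodicity of $\bfv$ is inherited directly from $u \in W^{2,2}_{per}(Y)$, placing $\bfv$ in the correct function space.
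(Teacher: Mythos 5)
Your proof is correct and follows essentially the same route as the paper: you reduce $\diverg(\bfL_0\nabla\bfv)$ to the Lam\'e-type operator $\mu_1\Delta v_p+(\mu_2+\lambda)\partial_p(\diverg\bfv)$, substitute the ansatz, and use $\Delta u=\theta-\chi_\Omega$ to cancel $\diverg(\bfP\chi_\Omega)$, with the same appeal to the $W^{2,2}_{per}$ regularity of $u$ to justify the formal manipulations. The only detail the paper records that you omit is that $\bfL_0\in\bbL$ forces $\mu_1+\mu_2>0$ and $\lambda>-(\mu_1+\mu_2)/n$, which guarantees $\lambda+\mu_1+\mu_2>0$ so the ansatz \eqref{eq:bfuP1} is well defined.
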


\begin{proof} Note that  $\bfL_0\in \bbL$ implies the constants $\mu_1$,
 $\mu_2$ and $\lambda$ necessarily satisfy
   $\mu_1\ge \mu_2$, $\mu_1+\mu_2>0$ and
$\lambda>-\frac{\mu_1+\mu_2}{n}$. Since $(\bfL_0)_{piqj}=\mu_1
\delta_{ij}\delta_{pq}+\mu_2\delta_{pj}\delta_{iq}+\lambda
\delta_{ip}\delta_{jq}$,  equation~\eqref{homellipsys} can be
formally written as \beqs \label{eq:elP0} \;\mu_1
(\bfv)_{p,ii}+(\mu_2+\lambda) (\bfv)_{q,qp}
+(\bfP)_{pi}(\chi_\Omega)_{,i}=0. \eeqs It is easy to verify by
direct calculation that $\bfv$ defined in equation~\eqref{eq:bfuP1}
satisfies equation~\eqref{eq:elP0} if $\bfP=\bfI$. If $\mu_2 +
\lambda = 0$ then $\bfv$ satisfying \eqref{eq:bfuP1} also satisfies
equation~\eqref{eq:elP0} for all $\bfP \in \rz^{n\times m}$. This
formal calculation can be made rigorous since solutions of
equation~\eqref{problem:poisson} are in $W^{2,2}_{per}(Y)$.
\end{proof}

 Now we note that if $\Omega$ is
a periodic E-inclusion specified by
equation~\eqref{problem:over1norm} with $\bfQ\in \bbQ$ (cf., \eqref{eq:bbQ}),
the second equation in~\eqref{problem:over1norm}
and equation~\eqref{eq:bfuP1} imply that for any
$\bfP\in \{a\bfI:\;a\in\rz\}$,
 \beqs \label{eq:Rtheta2}
 \nabla\bfv(\bfx, \bfP)=-\frac{(1-\theta)}{\mu_1+\mu_2+\lambda}\bfP\bfQ\;\qquad\;\oon\;\;\Omega
\eeqs
and \beqs \label{eq:Rtheta3}
\bfR\bfP=\frac{\bfP\bfQ}{\mu_1+\mu_2+\lambda}.
\eeqs
If $\mu_2+\lambda=0$,  equations~\eqref{eq:Rtheta2} and \eqref{eq:Rtheta3} hold for
 all $\bfP\in \rz^{n\times n}$ by  Lemma~\ref{lemma:31}.  Therefore, under the conditions
 specified in Lemma~\ref{lemma:31}, if $\Omega$ is a periodic E-inclusion then the
homogeneous Eshelby problem can be used to solve the inhomogeneous
Eshelby problem.

In applications to elasticity it is typically of interest to solve
the inhomogeneous Eshelby inclusion problem  \eqref{ELuI} for given
elasticity tensors $\bfL_1$ and $\bfL_0$, as this is a model of an
elastic composite. The preceding result shows that there is a
periodic E-inclusion with any positive semi-definite displacement
gradient on the inclusion, for $\bfL_0$ having the form
\eqref{eq:bfL2}.  The volume fraction of this E-inclusion is
independently assignable. The form of $\bfL_0$ is sufficiently
general to include all isotropic elasticity tensors with the usual
mild restrictions.

In applications to magnetism there are two problems of greatest
interest. In ferromagnetism one usually wants to solve the
magnetostatic equation div$(-\nabla v + \bfm \chi_{\Omega}) = 0$ for
given {\it magnetization} $\bfm \in \rz^3$. This problem corresponds
to the homogeneous Eshelby inclusion problem with $m = 1,\, n = 3,\,
\bfP = \bfm,\, \bfL_0 = \bfI$.  The preceding result in the case
$\mu_2  = \lambda = 0$ shows that for any given $\bfm \in \rz^3$,
any periodic E-inclusion (of any volume fraction) has the property
that the magnetic field $-\nabla v$ is uniform on the inclusion.
Paramagnetic or diamagnetic materials are usually described by a
linear relation between magnetization and magnetic field, $\bfm =
{\cal X} (-\nabla v)$, where ${\cal X}$ is the permeability tensor,
and the governing equation is again div$((\bfI + \calX)\nabla v) =
0$ with the average field given. A two-phase composite of such
materials is described by the inhomogeneous Eshelby problem with
$\bfL_{1,2} = (\bfI+ \calX_{1,2})$. The latter also describes a
two-phase composite of conductive materials with $\bfL_{1,2}$
interpreted as the conductivity tensors and $v$ as the electric
potential.

We now return to the general case.  From Lemma~\ref{lemma:31},
 equations~\eqref{Castdef}, \eqref{energy:inhom} and~\eqref{eq:Rtheta2},
 direct  calculations reveal the following explicit form for
  the effective  tensor of a two-phase composite with one phase
  occupying a periodic E-inclusion.

\begin{theorem} \label{thrm:LeOmega}
 Consider a two-phase periodic composite described by the
 inhomogeneous Eshelby inclusion problem~\eqref{ELuI}  for
 $n=m \ge 1$, with $\bfL^e(\Omega)$ defined
 by \eqref{Castdef} and
 \beas 
\bfL(\bfx,\Omega)=
\becs
\bfL_1 \in \bbL&  \bfx\in \Omega, \\
(\bfL_0)_{piqj}=\mu_1
\delta_{ij}\delta_{pq}+\mu_2\delta_{pj}\delta_{iq}+\lambda
\delta_{ip}\delta_{jq}\in \bbL  \qquad&\bfx\in Y\setminus \Omega,
\eecs \eeas where $\Omega$ is  a periodic E-inclusion specified by
equation~\eqref{problem:over1norm} with $\bfQ \in \bbQ$ given.
\begin{enumerate}
\item If $\mu_2+\lambda=0$, then
 \beqs \label{LeIstar:full}
\bfL^e(\Omega)=
\bfL_\theta-\theta(1-\theta)\dbfL(\bfLtld_\theta+\bfY(\bfQ))^{-1}\dbfL,
\eeqs where $\theta=|\Omega|/|Y|$,
$\bfL_\theta=\theta\bfL_1+(1-\theta)\bfL_0$,
$\bfLtld_\theta=\theta\bfL_0+(1-\theta)\bfL_1$,
$\dbfL=\bfL_0-\bfL_1$, and the mapping $\bfY(\bfQ)$ in components is
\beqs  \label{sl} (\bfY)_{piqj}=-(\bfL_0)_{piqj}+\mu_1
\delta_{pq}(\bfQ^{-1})_{ij} \eeqs for invertible $\bfQ$ (see
Remark~\ref{rmk:bfY} below).
\item If $\mu_2+\lambda\neq 0$ and $\bfI\in \calR(\dbfL)=\{\mbox{the
range of the linear mapping }\dbfL\}$, then
 \beqs \label{LeIstar}
\bfF\cdot\bfL^e(\Omega)\bfF&=&\bfF\cdot \bfL_0\bfF \\
&+&
\frac{\theta(\mu_1+\mu_2+\lambda)}{(1-\theta)-(\mu_1+\mu_2+\lambda)\Tr(\dbfL^{-1}\bfI)}(\Tr \bfF)^2
\nonumber
\eeqs
for all $\bfF\in \rz^{n\times n}$ satisfying $\Tr(\bfF) \ne 0$ and
\beqs \label{eq:FQ}
\frac{\dbfL\bfF}{\Tr(\bfF)}=\frac{(1-\theta)\dbfL\bfQ-(\mu_1+\mu_2+\lambda)\bfI}
{(1-\theta)-(\mu_1+\mu_2+\lambda)\Tr(\dbfL^{-1}\bfI)}.
\eeqs
\end{enumerate}
\end{theorem}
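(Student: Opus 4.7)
The plan is to invoke the Eshelby transplant already developed in the preceding subsection, which turns the inhomogeneous Eshelby problem into the homogeneous one whenever the induced field is uniform on the inclusion. Concretely, if $\bfv$ solves the homogeneous periodic problem with eigenstress $\bfP\chi_\Omega$ and satisfies $\nabla\bfv=-(1-\theta)\bfR\bfP$ on $\Omega$, then the \emph{same} $\bfv$ solves the inhomogeneous problem with average strain $\bfF$ tied to $\bfP$ by the matching relation
\[
\dbfL\bfF \;=\; (1-\theta)\,\dbfL\,\bfR\bfP - \bfP,
\]
and the energy admits the compact representation $2J_\ast(\bfL,\bfF,\Omega)=\bfF\cdot\bfL_0\bfF+\theta\,\bfP\cdot\bfF$. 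Since $\Omega$ is a periodic E-inclusion with $\bfQ\in\bbQ$, Lemma~\ref{lemma:31} makes $\bfR$ explicit: $\bfR\bfP=\bfP\bfQ/(\mu_1+\mu_2+\lambda)$, valid for \emph{all} $\bfP$ when $\mu_2+\lambda=0$ and only for $\bfP\in\{\alpha\bfI:\alpha\in\rz\}$ otherwise. Both parts of the theorem will follow by inverting the matching relation to get $\bfP$ in terms of $\bfF$ and substituting into the energy.

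For part (1), with $\mu_1+\mu_2+\lambda=\mu_1$ the matching relation is the Sylvester-type equation $\dbfL\bfF+\bfP=\tfrac{1-\theta}{\mu_1}\dbfL\,\bfP\bfQ$. I set $\bfG:=\bfP\bfQ/\mu_1$ (assuming $\bfQ>0$; the rank-deficient case is handled by a limit via Remark~\ref{rmk:bfY}) so the right-hand side becomes $(1-\theta)\dbfL\bfG$. The crux is the algebraic identity
\[
(\bfLtld_\theta+\bfY)\bfG \;=\; \mu_1\,\bfG\,\bfQ^{-1} - (1-\theta)\,\dbfL\,\bfG,
\]
which follows by expanding the definition of $\bfY$ in (\ref{sl}) and using $\bfLtld_\theta-\bfL_0=-(1-\theta)\dbfL$. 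The matching relation then rewrites as $(\bfLtld_\theta+\bfY)\bfG=-\dbfL\bfF$, so $\bfG=-(\bfLtld_\theta+\bfY)^{-1}\dbfL\bfF$ and
\[
\bfP \;=\; -\dbfL\bfF - (1-\theta)\,\dbfL\,(\bfLtld_\theta+\bfY)^{-1}\dbfL\bfF.
\]
Inserting this into $2J_\ast=\bfF\cdot\bfL_0\bfF+\theta\,\bfP\cdot\bfF$ and using $\bfL_\theta-\bfL_0=-\theta\dbfL$ produces the Hashin--Shtrikman-form~\eqref{LeIstar:full}.

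For part (2), Lemma~\ref{lemma:31} forces $\bfP=\alpha\bfI$, so the matching relation reduces to
\[
\dbfL\bfF + \alpha\bfI \;=\; \frac{(1-\theta)\alpha}{\mu_1+\mu_2+\lambda}\,\dbfL\bfQ.
\]
Solvability requires $\bfI\in\calR(\dbfL)$; applying $\dbfL^{-1}$ and taking the trace (using $\Tr\bfQ=1$) solves
\[
\alpha \;=\; \frac{(\mu_1+\mu_2+\lambda)\,\Tr\bfF}{(1-\theta)-(\mu_1+\mu_2+\lambda)\Tr(\dbfL^{-1}\bfI)},
\]
and reinserting this $\alpha$ into the matching relation produces the compatibility constraint~\eqref{eq:FQ} on $\bfF$. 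The closed form~\eqref{LeIstar} for $\bfF\cdot\bfL^e\bfF$ then drops out of $\theta\,\bfP\cdot\bfF=\theta\,\alpha\,\Tr\bfF$ in the energy expression.

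The main work lies in the Case (1) tensor identity: it is where the special scalar structure of $\bfL_0$ ($\mu_2+\lambda=0$) and the special geometric structure of the periodic E-inclusion (encoded in $\bfQ$) interlock, and the Hashin--Shtrikman form only emerges after matching the correction $\mu_1\delta_{pq}(\bfQ^{-1})_{ij}$ in the definition of $\bfY$ against the right-multiplication $\mu_1\bfG\bfQ^{-1}$ index by index. Everything else is bookkeeping.
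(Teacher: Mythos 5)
Your proof is correct, and part (2) is essentially identical to the paper's argument: both restrict to $\bfP=\alpha\bfI$ via Lemma~\ref{lemma:31}, solve the matching relation \eqref{FPeq} for $\alpha$ by applying $\dbfL^{-1}$ and taking the trace (using $\Tr\bfQ=1$), recover the compatibility condition \eqref{eq:FQ}, and feed $\theta\,\bfP\cdot\bfF=\theta\alpha\Tr\bfF$ into the energy identity \eqref{energy:inhom}. Where you genuinely diverge is part (1). The paper does not derive the Hashin--Shtrikman form from scratch: it invokes Milton's general $\bfY$-tensor representation of the effective tensor (\cite{Milton2002}, p.~397), under which \eqref{LeIstar:full} holds automatically once the $\bfY$-tensor is known, and the only computation is to identify that $\bfY$-tensor, defined through \eqref{eq:Ytensordef}, with the explicit formula \eqref{sl} using \eqref{FPeq} and \eqref{eq:Rtheta2}. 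You instead bypass \eqref{eq:Ytensordef} entirely: treating \eqref{sl} as a mere definition, you solve the matching relation for $\bfP$ in terms of $\bfF$ --- via the substitution $\bfG=\bfP\bfQ/\mu_1$ and the identity $(\bfLtld_\theta+\bfY)\bfG=\mu_1\bfG\bfQ^{-1}-(1-\theta)\dbfL\bfG$, which does follow from \eqref{sl}, the symmetry of $\bfQ^{-1}$, and $\bfLtld_\theta-\bfL_0=-(1-\theta)\dbfL$ --- and then substitute into \eqref{energy:inhom} to get $\bfF\cdot\bfL^e(\Omega)\bfF=\bfF\cdot\big[\bfL_\theta-\theta(1-\theta)\dbfL(\bfLtld_\theta+\bfY)^{-1}\dbfL\big]\bfF$ for every $\bfF$; since both sides are self-adjoint tensors (the symmetry of $\bfY$ matters here), the tensor identity \eqref{LeIstar:full} follows. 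Your route buys self-containedness: no external structural theorem is needed, only the transplant machinery of Section~\ref{sec:hominhom}. The paper's route buys brevity and the conceptual point that $\bfY(\bfQ)$ is literally the $\bfY$-tensor of the E-inclusion microstructure. Both arguments share the same unaddressed fine points --- invertibility of $\bfLtld_\theta+\bfY(\bfQ)$, and the rank-deficient $\bfQ$ case, which you and the paper alike dispose of only by pointing to the limit definition in Remark~\ref{rmk:bfY}.
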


\begin{remark} \label{rmk:bfY}
 The meaning of the term $(\bfLtld_\theta+\bfY(\bfQ))^{-1}$ in \eqref{LeIstar:full}
in the case that $\bfQ$ is not invertible is given by
\beqs \label{eq:Dinverse}
(\bfLtld_\theta+\bfY(\bfQ))^{-1}:=\lim_{\eps\searrow 0} (\bfLtld_\theta+\bfY(\bfQ+\eps \bfI))^{-1}.
\eeqs
\end{remark}

Note that because of the restriction on $\bfF$,
 equation~\eqref{LeIstar} is not sufficient to determine
all components of the  tensor $\bfL^e(\Omega)$.
But even limited explicit results on the effective tensor are
rare in the  theory of composites.

\begin{proof}
Let us first assume that $\mu_2+\lambda=0$ and that $\Omega$ is a periodic E-inclusion
corresponding to $\bfQ$ (cf., \eqref{eq:bbQ}).
{\sc Milton }(\cite{Milton2002}, page 397) has shown that
the effective tensor can be equivalently written as equation~\eqref{LeIstar:full}
in terms of  ``$\bfY$-tensor'', which satisfies
\beqs \label{eq:Ytensordef}
\bfY \left( \inttbar_\Omega \nabla \bfv\dx \right) =
&-&\inttbar_\Omega \big\{\bfL_1(\nabla \bfv+\bfF) \nonumber \\
&-&\inttbar_Y[\bfL(\bfx,\Omega)(\nabla \bfv+\bfF)]\dx \big\}\dy.
\eeqs
From equations~\eqref{FPeq}, \eqref{eq:Rtheta2} and~\eqref{eq:Ytensordef}, direct calculations show
that $\bfY$ is given by \eqref{sl} in the case that $\bfQ$ is invertible. If $\bfQ$
is not invertible, then one still recovers equation~\eqref{LeIstar:full} with
the definition given in Remark \ref{rmk:bfY}.

If $\mu_2+\lambda\neq 0$, Lemma~\ref{lemma:31} implies equation~\eqref{problem:over5}
holds for all $\bfP=a \bfI$ ($0\neq a\in \rz$) and therefore equation~\eqref{energy:inhom}
is valid for all $\bfF$ that satisfy equation~\eqref{FPeq}.
 Since $\bfI\in \calR(\dbfL)$, equations~\eqref{eq:Rtheta2} and \eqref{FPeq} imply that
\beas
(\mu_1+\mu_2+\lambda)\Tr(\bfF)=a[(1-\theta)-(\mu_1+\mu_2+\lambda)\Tr(\dbfL^{-1}\bfI)],
\eeas
and hence equation~\eqref{FPeq} can be rewritten as equation~\eqref{eq:FQ}.
Also,
\beqs \label{eq:bfFdotbfP}
\bfF\cdot \bfP&=&a\bfF\cdot\bfI=a\Tr(\bfF) \nonumber \\
&=&\frac{(\mu_1+\mu_2+\lambda) (\Tr \bfF)^2}{(1-\theta)-(\mu_1+\mu_2+\lambda)\Tr(\dbfL^{-1}\bfI)},
\eeqs
which, by equation~\eqref{energy:inhom}, implies
 equation~\eqref{LeIstar}.
\end{proof}

\begin{remark} \label{rmk:L2general}
The special form of $\bfL_0$ has played an important role in
connecting the scalar problem~\eqref{problem:poisson} and the vector
problem~\eqref{homellipsys}. The restriction on $\bfL_0$ in
Lemma~\ref{lemma:31} and Theorem~\ref{thrm:LeOmega} can be relaxed
to satisfy the weaker restriction
 \beqs \label{eq:generalL2}
(\bfL_0)_{piqj}(\khat)_i(\khat)_j(\khat)_q=\kappa(\khat)_p
\qquad\forall\,|\khat|=1\;\;\aand\;\;\bfL_0\in \bbL,
 \eeqs
 where $\kappa>0$ is a constant.
 To show this, one notices that, by Fourier expansion ({\sc Khachaturyan~\cite{Khachaturyan1983};
  Mura~\cite{Mura1987}}),
  the gradient of the solution of equation~\eqref{homellipsys} can be represented as
 \beqs \label{Fourier:Dbfu}
\left[\nabla \bfv\right]_{pi}= -\sum_{\bfk\in \calK \setminus\{0\}}
 \chihat_\Omega(\bfk) N_{pq}(\khat)(\khat)_i
(\khat)_j (\bfP)_{jq}\exp(i\bfk\cdot \bfx), \qquad
\eeqs
where $\calK$ is the reciprocal lattice of lattice $\calL$,
 $\khat=\bfk/|\bfk|$, $N_{pq}(\khat)$
is the inverse of the matrix $(\bfL_0)_{piqj}(\khat)_i(\khat)_j$,
and $\chihat_{\Omega}(\bfk)$ is the Fourier transformation of
 $\chi_{\Omega}(\bfx)$
 \beas
\chihat_\Omega(\bfk)=\inttbar_Y \chi_\Omega(\bfx)\exp(-i\bfk\cdot \bfx)\dx.
\eeas
Therefore, the linear mapping $\bfR$ of \eqref{eq:bfRdef} can always
 be represented as \beqs \label{eq:RFourier}
(\bfR)_{piqj}=\sum_{\bfk\in \calK \setminus\{0\}} \frac{1}{\theta(1-\theta)}
 N_{pq}(\khat)(\khat)_i(\khat)_j \qquad\\
 \inttbar_\Omega \inttbar_\Omega \exp(i\bfk\cdot
(\bfx-\bfx'))\dx'\dx.
\nonumber \eeqs
Similarly, the second gradient of
the solution of problem~\eqref{problem:poisson} can be represented
as \beqs\label{Fourier:DDu} [\nabla\nabla
u(\bfx)]_{pi}=-\sum_{\bfk\in \calK \setminus\{0\}}
 \chihat_\Omega(\bfk)(\khat)_i
(\khat)_p\exp(i\bfk\cdot \bfx).
\eeqs
By comparing \eqref{Fourier:Dbfu} with \eqref{Fourier:DDu},
we note that that if equation~\eqref{eq:generalL2} holds, i.e.,
 \beas \label{eq:generalL222}
 N_{pq}(\khat)\, (\khat)_q=\frac{1}{\kappa}(\khat)_p
\qquad\forall\,|\khat|=1,
 \eeas
then for $\bfP=\bfI$,
  \beqs \label{eq:bfuP2}
\nabla \bfv=\frac{\nabla\nabla u(\bfx)}{\kappa}.
 \eeqs
This shows that under the weaker
restriction \eqref{eq:bfL2} on $\bfL_0$, the scalar problem
\eqref{problem:poisson} generates a solution of homogeneous Eshelby
inclusion problem \eqref{homellipsys}.
\end{remark}

\begin{remark}
It is useful to notice that if $\bfL_0$ satisfies
\eqref{eq:generalL2}, the energy of the homogeneous Eshelby
inclusion problem~\eqref{homellipsys} for $\bfP=\bfI$ depends only
on the volume fraction of $\Omega$. To see this, we notice by
equations~\eqref{eq:bfRdef} and~\eqref{eq:bfuP2}, \beqs
\label{eq:bfRkappa0}
\bfI\cdot\bfR\bfI=\frac{-1}{(1-\theta)\kappa}\bfI\cdot\bigg[\inttbar_\Omega
\nabla \nabla u\dx\bigg] =\frac{-1}{\kappa
\theta(1-\theta)}\inttbar_Y \chi_\Omega \Delta u\dx
=\frac{1}{\kappa}, \qquad \eeqs which, together with
\eqref{eq:elenergyP}, implies \beqs \label{eq:BitCrum} \inttbar_Y
\nabla \bfv(\bfx,\bfI)\cdot\bfL_0 \nabla \bfv(\bfx,\bfI)\dx
=\frac{\theta(1-\theta)}{\kappa}. \eeqs In the context of linearized
elasticity, equation~\eqref{eq:BitCrum} is referred to as the
Bitter-Crum theorem ({\sc Bitter~\cite{Bitter1931};
Crum~\cite{Crum1940};  Cahn \& Larche~\cite{CahnLarche1984}}).
Also, from the positive semi-definiteness of
$\bfR\bfI$ (cf., \eqref{eq:RFourier}) we have
\beqs \label{eq:bfRkappa}
\bfR\bfI=\frac{1}{\kappa}\bfQ
\eeqs
for some $\bfQ \in \bbQ$ (cf., \eqref{eq:bbQ}).
\end{remark}


\subsection{Periodic E-inclusions as optimal
structures for two-phase composites} \label{sec:locminI}

In this section, we consider the minimization/maximization problems
over measurable $\Omega$ with fixed volume fraction $\theta$ (cf.,
equation~\eqref{Castdef:min}) \beqs \label{compoptvar1}
J_\theta^l(\bfF)=\inf_{|\Omega|/|Y|=\theta}
\half\bfF\cdot\bfL^e(\Omega)\bfF \eeqs and \beqs \label{compoptvar2}
J_\theta^u(\bfF)=\sup_{|\Omega|/|Y|=\theta}
\half\bfF\cdot\bfL^e(\Omega)\bfF. \eeqs If the infimum (resp.
supremum) of problem~\eqref{compoptvar1} (resp.~\eqref{compoptvar2})
is attained by  $\Omega_\star$  (resp. $\Omega^{\star}$)
 \beas
\half\bfF\cdot\bfL^e(\Omega_\star)\bfF=J_\theta^l(\bfF) \quad
 ({\rm resp.}\ \  \half\bfF\cdot\bfL^e(\Omega^\star)\bfF=  J_\theta^u(\bfF)), \eeas
then  the region $\Omega_\star$ (resp. $\Omega^{\star}$),
corresponding to the optimal composite of   least (resp. greatest)
moduli,  is   referred to as {\em an optimal microstructure}.
Below we
note that periodic E-inclusions
 are optimal  for
 $\half \bfF\cdot \bfL^e(\Omega)\bfF$ under suitable
hypotheses on the  tensors $\bfL_1$ and $\bfL_0$.


Following from Theorem~2.1. in {\sc Liu~\cite{LiuMultiphase}},
we summarize the optimality property of periodic E-inclusions
below.
\begin{theorem} \label{thrm:qEF}
Let $0<\theta<1$.
Consider a periodic composite defined for measurable $\Omega \subset Y$ by
  \beqs \label{eq:bfLXI1}
\bfL(\bfx,\Omega)=
\becs
\bfL_1 \in \bbL&  \bfx\in \Omega, \\
(\bfL_0)_{piqj}=\mu_1
\delta_{ij}\delta_{pq}+\mu_2\delta_{pj}\delta_{iq}+\lambda
\delta_{ip}\delta_{jq} \in \bbL &\bfx\in Y\setminus \Omega. \eecs
\qquad \eeqs Assume
$\calR(\dbfL)\supset \rz^{n\times n}_{sym}$.  Let $\bfF\in
\rz^{n\times n}_{sym}$, $\Tr \bfF \ne 0$, be the average applied field and \beqs
\label{eq:bfQbfF} \bfQ= \frac{\bfF}{\Tr \bfF} \left[
1 - \frac{\mu_1 + \mu_2 + \lambda}{1-\theta} \Tr (\dbfL^{-1}\bfI) \right]
+ \frac{\mu_1 + \mu_2 + \lambda}{1-\theta} \dbfL^{-1}\bfI.
\eeqs
\begin{enumerate}
  \item   For any measurable $\Omega \subset Y$ with
  $|\Omega|/|Y|=\theta$ and any $\bfF\in \rz^{n\times n}_{sym}$, \\ if  $\bfL_1\ge
  \bfL_0$,
\beqs \label{eq:optbd0lower}
\bfF\cdot\bfL^e(\Omega)\bfF-\bfF\cdot \bfL_0\bfF \hspace{4cm}\nonumber \\
\ge
\frac{\theta(\mu_1+\mu_2+\lambda)}{(1-\theta)-(\mu_1+\mu_2+\lambda)
\Tr(\dbfL^{-1}\bfI)}(\Tr \bfF)^2; \eeqs
 if   $\bfL_1\le \bfL_0$,
\beqs \label{eq:optbd0upper}
\bfF\cdot\bfL^e(\Omega)\bfF-\bfF\cdot \bfL_0\bfF \hspace{4cm}\nonumber \\
\le
\frac{\theta(\mu_1+\mu_2+\lambda)}{(1-\theta)-(\mu_1+\mu_2+\lambda)
\Tr(\dbfL^{-1}\bfI)}\Tr(\bfF)^2. \eeqs

\item Equality holds in \eqref{eq:optbd0lower} or~\eqref{eq:optbd0upper}
 for some  $\bfF\in
\rz^{n\times n}_{sym}$, $\Tr \bfF \ne 0$  and open $\Omega \subset Y$
with $|\Omega|/|Y|=\theta$
if and only if $\Omega$ is a periodic
E-inclusion corresponding to $\bfQ$ and volume fraction $\theta$.
\end{enumerate}

\end{theorem}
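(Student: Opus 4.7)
My plan is to recognize the inequalities \eqref{eq:optbd0lower}--\eqref{eq:optbd0upper} as Hashin-Shtrikman (HS) bounds arising from a one-parameter family of isotropic trial polarizations, and to use Theorem \ref{thrm:LeOmega} for the equality case.

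I would first invoke the HS variational principle with comparison medium $\bfL_0$. Under $\bfL_1\ge\bfL_0$ (resp.\ $\bfL_1\le\bfL_0$), the functional
\begin{equation*}
\calF_\Omega(\bfp) := \inttbar_Y\bigl[2\bfF\cdot\bfp\chi_\Omega - \bfp\cdot\dbfL^{-1}\bfp\chi_\Omega - \nabla\bfv_\bfp\cdot\bfL_0\nabla\bfv_\bfp\bigr]\,\dx,
\end{equation*}
in which $\bfv_\bfp\in W^{1,2}_{per}(Y;\rz^n)$ solves $\diverg[\bfL_0\nabla\bfv_\bfp+\bfp\chi_\Omega]=0$ with periodic boundary conditions, satisfies $\max_\bfp \calF_\Omega(\bfp) = \bfF\cdot(\bfL^e(\Omega)-\bfL_0)\bfF$ (resp.\ $\min$). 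The condition $\calR(\dbfL)\supset\rz^{n\times n}_{sym}$ permits restriction to symmetric polarization fields. I then evaluate $\calF_\Omega$ on the one-parameter family $\bfp = a\bfI$ of isotropic polarizations; by Lemma \ref{lemma:31} together with the Bitter-Crum identity \eqref{eq:BitCrum}, both $\inttbar_Y\bfp\cdot\dbfL^{-1}\bfp\chi_\Omega\,\dx$ and $\inttbar_Y\nabla\bfv_\bfp\cdot\bfL_0\nabla\bfv_\bfp\,\dx$ depend on $\Omega$ only through the volume fraction $\theta$. The restriction of $\calF_\Omega$ to this family is therefore a shape-independent quadratic in $a$ whose explicit extremum is exactly the right-hand side of \eqref{eq:optbd0lower}--\eqref{eq:optbd0upper}, proving Part 1. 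The edge case $\Tr\bfF=0$ is trivial since the right-hand side vanishes while $\bfL_1\ge\bfL_0$ (resp.\ $\le$) already yields $\bfL^e(\Omega)\ge\bfL_0$ (resp.\ $\le$) by a Voigt-type argument.

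For Part 2, the \emph{if} direction is immediate from Theorem \ref{thrm:LeOmega}(2): formula \eqref{LeIstar}, applied to a periodic E-inclusion with the matrix $\bfQ$ of \eqref{eq:bfQbfF} and the $\bfF$ linked to $\bfQ$ by \eqref{eq:FQ}, matches the right-hand side of \eqref{eq:optbd0lower}/\eqref{eq:optbd0upper} verbatim. For the converse I would argue that equality in the bound implies that the trial $\bfp^\ast = a^\ast\bfI\chi_\Omega$ is an actual extremizer of $\calF_\Omega$. The Euler-Lagrange condition then forces $\dbfL(\nabla\bfv_{\bfp^\ast}+\bfF) = a^\ast\bfI$ a.e.\ on $\Omega$, so $\nabla\bfv_{\bfp^\ast}$ is constant on $\Omega$; reading \eqref{eq:bfuP1} backwards converts this to $\nabla\nabla u = -(1-\theta)\bfQ$ a.e.\ on $\Omega$ with $\bfQ$ as in \eqref{eq:bfQbfF} (the scale and trace being fixed by \eqref{eq:bfRkappa} and $\Tr\bfQ=1$). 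The converse half of Theorem \ref{thrm:QEin} then identifies $\Omega$ as a periodic E-inclusion corresponding to $\bfQ$ and volume fraction $\theta$.

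\textbf{Main obstacle.} The delicate step is the converse direction in Part 2: passing from HS-optimality \emph{within} the restricted family $\bfp = a\bfI$ to the overdetermined pointwise identity $\nabla\nabla u=-(1-\theta)\bfQ$ on $\Omega$. This requires strict convexity/concavity of $\calF_\Omega$ on its natural domain to give uniqueness of the HS extremizer, plus the rigidity statement in Theorem \ref{thrm:QEin} to recognize the resulting overdetermined problem as an E-inclusion. Additional care is needed to verify that the normalization $\Tr\bfQ=1$ is consistent with \eqref{eq:bfQbfF}, and to justify invertibility of $\dbfL$ on $\rz^{n\times n}_{sym}$ under the hypothesis $\calR(\dbfL)\supset\rz^{n\times n}_{sym}$ so that the appearance of $\dbfL^{-1}\bfI$ in both \eqref{eq:bfQbfF} and the HS functional is well-defined.
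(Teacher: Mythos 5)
The paper itself contains no proof of this theorem for you to be compared against: it is imported with the remark ``Following from Theorem~2.1 in {\sc Liu}~\cite{LiuMultiphase}\dots'', so the argument lives entirely in that reference. Judged on its own terms, your plan is a correct reconstruction of that (standard) argument: the Hashin--Shtrikman variational principle with comparison medium $\bfL_0$; restriction to the isotropic polarizations $\bfp=a\bfI\chi_\Omega$; Lemma~\ref{lemma:31} together with the Bitter--Crum identity \eqref{eq:BitCrum} to make all three terms of the restricted functional depend on $\Omega$ only through $\theta$; optimization of the resulting quadratic in $a$, whose extremal value is exactly the right-hand side of \eqref{eq:optbd0lower}--\eqref{eq:optbd0upper}. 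For Part~2, sufficiency follows from Theorem~\ref{thrm:LeOmega} (note \eqref{eq:bfQbfF} is precisely the inversion of \eqref{eq:FQ}), and necessity from the observation that equality forces $a^\ast\bfI\chi_\Omega$ to be a genuine extremizer, whose Euler--Lagrange condition makes the symmetric part of $\nabla\bfv_{\bfp^\ast}$ constant on $\Omega$; since $\nabla\bfv_{\bfp^\ast}=a^\ast\nabla\nabla u/(\mu_1+\mu_2+\lambda)$ by Lemma~\ref{lemma:31}, $\nabla\nabla u$ is constant on $\Omega$ with trace $\theta-1$ there, which is the definition of a periodic E-inclusion, and solving the stationarity relation for that constant reproduces $-(1-\theta)\bfQ$ with $\bfQ$ as in \eqref{eq:bfQbfF}.

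Three corrections. (a) With the paper's convention $\dbfL=\bfL_0-\bfL_1$, the quadratic term in your functional has the wrong sign: it should be $+\,\bfp\cdot\dbfL^{-1}\bfp\,\chi_\Omega$, equivalently $-\,\bfp\cdot(\bfL_1-\bfL_0)^{-1}\bfp\,\chi_\Omega$; as written, the functional is convex rather than concave in $\bfp$ when $\bfL_1\ge\bfL_0$, and the max characterization fails. (b) Your sufficiency step cites only Theorem~\ref{thrm:LeOmega}(2), which assumes $\mu_2+\lambda\neq 0$; when $\mu_2+\lambda=0$ you must use part~1 of that theorem, or more simply the energy identity \eqref{energy:inhom} combined with \eqref{FPeq}, \eqref{eq:Rtheta3} and the computation \eqref{eq:bfFdotbfP}, which covers both cases at once. (c) The strict concavity/uniqueness you flag as the main obstacle is not actually needed: equality in the bound already exhibits the explicit trial field $a^\ast\bfI\chi_\Omega$ as an unconstrained global extremizer, and the first-order condition at \emph{any} extremizer gives the pointwise relation on $\Omega$; likewise the final identification does not really need Theorem~\ref{thrm:QEin} (whose converse only tells you $\bfQ\in\bbQ$, hence that equality is unattainable when \eqref{eq:bfQbfF} produces an indefinite $\bfQ$) --- constancy of $\nabla\nabla u$ on $\Omega$ with the correct trace is already the definition of a periodic E-inclusion.
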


\begin{remark}

The bounds \eqref{eq:optbd0lower} and \eqref{eq:optbd0upper} are
special cases of the Hashin-Shtrikman bounds.
It is well-known that there exist many nonperiodic microstructures that achieve
these bounds.  For instance, coated spheres, confocal
ellipsoids, multicoated spheres, multi-rank laminations, and
Sigmund's constructions  achieve the optimal
bounds~\eqref{eq:optbd0lower} and \eqref{eq:optbd0upper} in various
cases, see {\sc Hashin \& Shtrikman~\cite{HashinShtrikman1962a,
HashinShtrikman1963}; Milton~\cite{Milton1980}; Lurie \&
Cherkaev~\cite{LurieCherkaev1985}; Allaire \&
Kohn~\cite{AK1993a,AK1993b, AK1994}; Grabovsky \& Kohn
\cite{GK1995a, GK1995b}; Sigmund~\cite{Sigmund2000}} and {\sc
Gibiansky \& Sigmund~\cite{GibianskySigmund2000}}.
\end{remark}

\begin{remark}\label{rmkltran}
If we apply linear transformations
 \beqs \label{eq:ltrans}
\bfx \longrightarrow \bfx'=\Lambda^{-1} \bfx \qquad \aand \qquad
\bfv \longrightarrow \bfv'=  \bfG^{-1}\bfv, \eeqs to equation
\eqref{ELuI}, we can generalize Theorems~\ref{thrm:LeOmega}
and~\ref{thrm:qEF} with $\bfL_0$ of form \beas (\bfL_0)_{piqj}=
\mu_1 (\bfG^T\bfG)_{pq}(\Lambda\Lambda^T)_{ij}+
\mu_2 (\bfG^T\Lambda^T)_{pj}(\bfG^T\Lambda^T)_{qi}\\
+\lambda (\bfG^T\Lambda^T)_{pi}(\bfG^T\Lambda^T)_{qj},
\eeas
where $\bfG,\Lambda\in \rz^{n\times n}$ are any nonsingular matrices. These
changes of variables give inclusions with constant field, but, in some cases,
the inclusions are not strictly E-inclusions because the
Laplacian becomes a more general elliptic operator (with constant
coefficients).   We could have
used a more general elliptic operator in the definition of E-inclusions
but the above change of variables could then be used to reduce
that case to ours.  So, we have chosen to use the simpler definition.
These changes of variables
can be used to change the periodicity and the values of the field
$\nabla \bfv$ on the inclusion.

\end{remark}

\begin{remark}
From Remark~\ref{rmk:L2general} one can generalize
Theorems~\ref{thrm:LeOmega} and \ref{thrm:qEF} to the case of
$\bfL_0$ satisfying equation~\eqref{eq:generalL2}. The linear
transformations~\eqref{eq:ltrans} can  be applied to these $\bfL_0$
to further generalize Theorems~\ref{thrm:LeOmega} and
\ref{thrm:qEF}.  We have not been able to characterize all
elasticity tensors that can be obtained by linear transformations~\eqref{eq:ltrans}
together with~\eqref{eq:generalL2}, but it seems to be a very large set.

\end{remark}

\begin{remark}
Consider  two-phase composites with conductivity tensors
 $0<\bfA_1,\bfA_2\in \rz^{n\times n}_{sym}$ and $\dbfA=\bfA_2-\bfA_1 < 0$.
We can adapt Theorems~\ref{thrm:LeOmega} and~\ref{thrm:qEF} by
setting $(\bfL_0)_{piqj}=\delta_{pq}(\bfA_2)_{ij}$ and
$(\bfL_1)_{piqj}=\delta_{pq}(\bfA_1)_{ij}$.
 After appropriate
    linear transformations (cf., Remark~\ref{rmkltran})
    and some  algebraic  calculations, from~\eqref{eq:optbd0lower} or~\eqref{eq:optbd0upper}
     we obtain
    \beas\label{bd:trace}
\Tr(\bfA_2(\bfA^e(\Omega)-\bfA_2)^{-1})&\le&
\frac{1}{\theta} \Tr(\bfA_2(\bfA_1-\bfA_2)^{-1})+\frac{1-\theta}{\theta},
\nonumber\\
\Tr(\bfA_1(\bfA_1-\bfA^e(\Omega))^{-1})&\le& \frac{1}{1-\theta}
\Tr(\bfA_1(\bfA_1-\bfA_2)^{-1})-\frac{\theta}{1-\theta}.
\eeas
These bounds, called the ``trace bounds'' in the literature, have been
previously obtained  by {\sc Milton \& Kohn~\cite{MiltonKohn1988}}
 and have been proved to be attainable by {\sc Grabovsky~\cite{Grabovsky1993}}.
\end{remark}

\renewcommand{\theequation}{\thesection-\arabic{equation}}
\setcounter{equation}{0}
\section{Summary and discussion}\label{sec:discuss}

We have shown the existence of special inclusions for which
the overdetermined problems~\eqref{sec1:over1Prz}-\eqref{eq:overnonp}
and \eqref{sec1:over1P}-\eqref{eq:overp} admit a solution.
They are constructed as the coincident set of a simple
variational inequality with respect to piecewise quadratic obstacles.
 These structures  are called {\em E-inclusions} based on
their analogy  with ellipsoids and their extremal properties
for energy minimization problems in  homogenization theory.
Important restrictions on the parameters which characterize a periodic E-inclusion, namely,
the matrices $\bbK$ and volume fractions $\Theta$,
have also been derived, see equation~\eqref{eq:QiNeccond}.
Numerical studies have revealed the diversity of periodic
and nonperiodic E-inclusions.


It is of interest in the above to know what are the restrictions
on the symmetric matrices $\bbK = (\bfQ_1, \dots, \bfQ_N)$
and volume fractions $\Theta = ( \theta_1,\cdots, \theta_N) $ for which we
can find periodic E-inclusions.
 Let  $u\in W^{2, 2}_{per}( Y)$ be  a solution of \eqref{sec1:over1P}
 associated to a periodic E-inclusion. Using $L^p$ estimates for the Laplace operator
 we see that $u$ is in fact bounded in $W^{2,p}_{per}(Y)$ for any
 $ 1\le p<\infty$ since $\Delta u$ is bounded in $L^\infty_{per}(Y)$
 ({\sc Gilbarg \& Trudinger}~\cite{GilbargTrudinger1983}, page 235).
  Then we can rescale it and get a sequence $u^{(k)}(\bfx)
 = (1/k^2) u (k\bfx)\chi_D(\bfx) $ for an open bounded domain $D$.
 The corresponding sequence of gradients $\bfv^{(k)}(\bfx) = \nabla u^{(k)} (\bfx)= (1/k) \nabla u(k \bfx)$
 is bounded in $W^{1,p}(D,\rz^n)$ for any $ 1\le p<\infty$.  The
 study of the gradient Young measure of the sequence $\bfv^{(k)}$ gives
 natural restrictions on $\bbK$ and volume fractions $\Theta$.  For this and
 other purposes it is useful to define the concept of a  sequential E-inclusion.
 A {\bf sequential E-inclusion} is a homogeneous gradient Young measure that
is generated by a sequence  bounded  in $W^{1,p}(D)$
 for any $ 1\le p<\infty$,  has zero center of mass,
  and satisfies
\beqs \label{eq:nu}
  \nu = \sum_{i=1}^N \theta_i\delta_{\bfQ_i} + \theta_0 \mu,
\eeqs
where $\theta_1,\cdots, \theta_N \ge 0$,
 $\theta_0=1-\sum_{i=1}^N\theta_i \ge 0 $,  $\theta_0p_0=-\sum_{i=1}^N \theta_i\Tr(\bfQ_i)$,
and $\mu$ is a probability measure satisfying
supp$\, \mu \subset \{X\in \rz^{n\times n}_{sym}: \Tr(X)=p_0\}$.
 These conditions are all satisfied by the gradient Young measure generated by the
 rescaled sequence $\bfv^{(k)}=\nabla u^{(k)}$.  In
 particular, the Dirac masses at $\bfQ_i$ arise from the periodic E-inclusions and the
 condition $\Tr(X)=p_0$ arises from the Poisson equation $\Delta u = p_0$
 on the complementary set. From the basic relation between gradient Young measures and
 quasiconvex functions ({\sc Kinderlehrer \& Pedregal~\cite{KinderlehrerPedregal1991, KinderlehrerPedregal1994} }),
 we have that
\beq \label{eq:psi}
 \int_{\rz^{n \times n}} \psi (X) \, d\nu(X) \ge \psi (0)
\eeq
for all quasiconvex functions $\psi: \rz^{n \times n} \to \rz$.
The notion of quasiconvexity  used here is the one appropriate
for second gradient ({\sc \v{S}ver\'{a}k~\cite{Sverak1992}}).  In particular,
we can show that, by similar arguments as in {\sc Allaire \& Kohn~\cite{AK1993a}},
 equation~\eqref{eq:psi} holds if $\psi$ is quadratic and
 is rank-one convex for symmetric rank-one
matrices.

We now show that
the previous restrictions~\eqref{eq:QiNeccond}
on $\bbK$ and $\Theta$ also follow from equation~\eqref{eq:psi}.
For any $X\in \rz^{n\times n}_{sym}$, consider the quadratic function
 $\psi(X)=\bfm\cdot (\Tr(X)X-X^2)\bfm$
for some $\bfm\in \rz^n$. Direct calculations reveal that
$\psi(X+\lambda \bfn\otimes \bfn)=\psi(X)+\lambda
(\bfm\cdot X\bfm |\bfn|^2+\Tr(X)(\bfn\cdot\bfm)^2-2(\bfn\cdot\bfm)\bfn\cdot X\bfm)$ is an affine
function of $\lambda$ for any $\bfn,\,\bfm\in \rz^n$ and therefore is
convex on symmetric rank-one matrices. (In fact, $\psi$ is a null Lagrangian
in this second gradient context.)
 An application of \eqref{eq:psi} to $\pm \psi$  shows that for a sequential
E-inclusion~\eqref{eq:nu},
\beqs \label{eq:TrXX2}
0=\int_{\rz^{n\times n}} (\Tr(X)X-X^2)d\nu(X) \hspace{5cm}\\
=\sum_{i=1}^N\theta_i(\Tr(\bfQ_i)\bfQ_i-\bfQ_i^2)
+\theta_0\int_{\rz^{n\times n}} (\Tr(X)X-X^2)d\mu(X). \nonumber
\eeqs
Since the center of mass of $\nu$ is zero, we have
\beqs \label{eq:TrXX}
\int_{\rz^{n\times n}} \Tr(X)Xd\mu(X)=p_0\int_{\rz^{n\times n}}Xd\mu(X)
=p_0[-\sum_{i=1}^N \theta_i\bfQ_i]/\theta_0.
\eeqs
The last  term in \eqref{eq:TrXX2} can be bounded using Jensen's inequality
\beqs \label{eq:X2}
\int_{\rz^{n\times n}} X^2d\mu(X)\ge \bigg[\int_{\rz^{n\times n}} Xd\mu(X)\bigg]^2
=[-\sum_{i=1}^N \theta_i\bfQ_i]^2/\theta_0^2.
\eeqs
Substituting equations~\eqref{eq:TrXX} and \eqref{eq:X2} into \eqref{eq:TrXX2}, we
obtain
\beas
\sum_{i=1}^N [\theta_0\Tr(\bfQ_i)+\sum_{j=1}^N \theta_j\Tr(\bfQ_j)]\theta_i\bfQ_i\ge
\theta_0\sum_{i=1}^N \theta_i\bfQ_i^2+[\sum_{i=1}^N \theta_i\bfQ_i]^2,
\eeas
which is identical to equation~\eqref{eq:QiNeccond}. Of course, there are many other
quasiconvex functions that could be used in equation~\eqref{eq:psi} that would
evidently give further restrictions on the $\bbK$ and $\Theta$.

As shown in recent work on optimal bounds for multiphase composites ({\sc Liu~\cite{LiuMultiphase}}),
the concept of a sequential E-inclusion is useful to characterize the
microstructures that attain the Hashin-Shtrikman bounds. The result in its strongest
form states that if $\bfL_1, \dots, \bfL_N$ satisfies $\calR(\bfL_i - \bfL_0)
\supset \rz^{n \times n}_{sym},\ i \ne 0 $ (see Theorem \ref{thrm:qEF}),
a microstructure  attains the Hashin-Shtrikman bounds if and only
if it is a sequential E-inclusion.

 E-inclusions can also be used to solve
problems on the effective behavior of nonlinear  composites.
For instance, let us consider
a periodic two-phase nonlinear composite with effective
properties defined by
\beqs \label{eq:Ie}
I^e(\bfe_0)=\min_{w\in W^{1,2}_{per}(Y) }
\inttbar_Y I(\nabla w+\bfe_0, \bfx)\dx ,
\eeqs
where $\bfe_0\in \rz^n$ is the applied average field, and
the energy function $I:\rz^n\times \rz^n\to \rz$  is given by
\beas
I(\bfe,\bfx)=\becs
I_1(\bfe)&\iif\;\bfx\in\Omega,\\
\bfe\cdot\bfI\bfe&\iif\;\bfx\in Y\setminus \Omega.\\
\eecs
\eeas
Here $I_1:\rz^n\to \rz$,  describing the
nonlinear phase on $\Omega$, is a strictly convex but not necessarily quadratic function,
and the identity matrix $\bfI$ describes the linear phase on $Y\setminus \Omega$.
If $\Omega$ is a periodic E-inclusion with matrix $\bfQ\in \bbQ$ and volume fraction
$\theta$ (cf., \eqref{problem:over1norm}), the minimization problem \eqref{eq:Ie}
 is explicitly solvable in terms of a linear combination of functions satisfying
\eqref{problem:over1norm} in spite of the nonlinearity of $I_1(\bfe_0)$.  The
details will be presented in a future publication. Essentially,
the fact that $\nabla w$ is constant on $\Omega$ reduces the nonlinear part of the
problem to an algebraic equation.   This algebraic equation
may be solvable in any nonlinear theory for which linear boundary conditions
gives a linear solution.  It is then only a matter of showing that
these boundary conditions can be satisfied.
This observation (for ellipsoids) goes back to
{\sc Hill~\cite{Hill1965}}.

\begin{figure}
\begin{center}
\includegraphics[bb=0 0 50 50, viewport=0 0 360 360, scale=0.7]{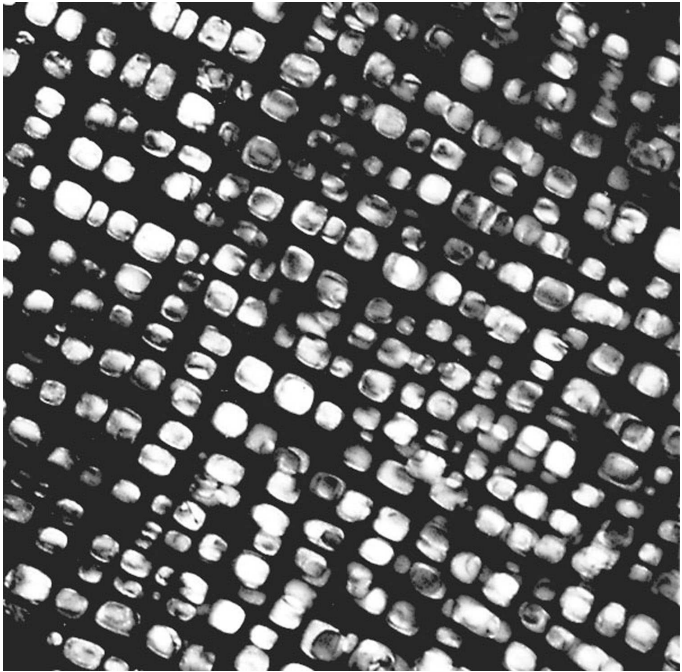}
\caption{\it Dark-field transmission electron micrographs of the Ni$_3$Ge precipitates,
see {\sc Kim \& Ardell~\cite{KimArdell2003}} for the experimental conditions. \label{fig:Ardell}}
 \end{center}
\end{figure}

Structures  that look a lot like E-inclusions are apparently seen in
nature.  For instance,  Fig.~\ref{fig:Ardell} is a dark-field electron micrograph
 of Ni$_3$Ge precipitates  in late stage coarsening of binary Ni-Ge alloys.
The experimental conditions are described in {\sc Kim \& Ardell}~\cite{KimArdell2003}.  The
transformation strain is dilatational in this case and the transformation is
cubic to cubic.
Precipitates  in this and other coarsening nickel-based superalloys form
approximately periodic arrays and have shapes like those seen in Fig.~\ref{fig:Ein3DQ1}.
Like periodic E-inclusions, these precipitates become more cuboidal at higher volume
fraction. In late stage coarsening it is accepted that the
minimization of elastic energy, both with respect to fields as well as shapes,
governs the evolution of microstructure ({\sc Jou, Leo \& Lowengrub~\cite{JouLeoLowengrub1997};
Thornton, Akaiwa \& Voorhees~\cite{ThorntonAkaiwaVoorhees2003}}).  In
addition, interfacial energy plays a role in the evolution,
but its influence is less important at the later stages of coarsening.
 E-inclusions are likely to be favored in the evolution of precipitates in these
alloys since they are optimal shapes with respect to elastic energy, as shown
in Section~\ref{sec:appl}.  To make this possible connection between E-inclusions and
Ni$_3$Ge precipitates quantitative, one should find an estimate of the effect
of the presence of interfacial energy on shape.  Also, one would also need to check
that the elasticity tensor of the matrix phase exceeds that of the precipitate
(or generalize our results) and generalize our results to allow a
cubic matrix phase.

Finally we remark that the analogy between ellipsoids
 and periodic E-inclusions  is not perfect in the sense
 that $\nabla \bfv$ is uniform on the ellipsoid  in  problem~\eqref{problem:magel} for {\em any}
matrix $\bfP$, whereas periodic E-inclusions have this property only
for the  matrix $\bfP=\bfI$, unless the tensor $\bfL_0$ has a
special form. Nevertheless,  we anticipate periodic E-inclusions can
find wide applications in the theories of micromechanics, composites
and fracture mechanics as does the ubiquitous  {\em Eshelby's
solution} in  these fields.

\vspace{5mm}
\noindent {\bf Acknowledgment}.  We thank the reviewers for valuable
comments.
This work was supported by
AFOSR (Game Changer, GRT00008581/ RF60012388, and Computational Mathematics,
FA9550-09-0339), the MURI program
(ONR N000140610530, ARO W911NF-07-1-0410),  DOE (DE-FG02-05ER25706), NSF (DMS-0757355) and NSF (CMMI-1101030).
L. L. also acknowledges the startup support from University of Houston and the support of the Texas Center for Super-conductivity, University of Houston (TcSUH).




\address{ Liping Liu\\
Department of Mechanical Engineering \\
University of Houston\\
Houston, TX 77204\\
email:{liuliping@uh.edu}
\and
Richard D. James\\
Department of Aerospace Engineering and Mechanics\\
 University of Minnesota\\
  Minneapolis MN55455\\
email:{james@umn.edu}
\and
Perry H. Leo\\
Department of Aerospace Engineering and Mechanics\\
 University of Minnesota\\
  Minneapolis MN55455\\
email:{phleo@aem.umn.edu}}

\end{document}